\newtheorem{theorem}{Theorem}[section]
\newtheorem{proposition}[theorem]{Proposition}
\newtheorem{corollary}[theorem]{Corollary}
\theoremstyle{definition}
\newtheorem{definition}[theorem]{Definition}
\hfill \fbox{}}
\newtheorem*{theorem*}{Theorem}
\theoremstyle{remark}
\newtheorem{remark}[theorem]{Remark}
\numberwithin{equation}{section}
\newcommand{\norm}[1]{\left\lvert\left\lvert#1\right\rvert\right\rvert}
\DeclareMathOperator*{\Span}{span \;}
\DeclareMathOperator*{\Rpart}{Re \,}
\DeclareMathOperator*{\Impart}{Im}
\newcommand{\Dom}{\textnormal{Dom}}
\newcommand{\EL}{\mathcal{L}}
\newcommand{\C}{\mathbb{C}}
\newcommand{\D}{\mathbb{D}}
\newcommand{\T}{\mathbb{T}}
\newcommand{\PR}{\textnormal{Re}}
\newcommand{\al}{\alpha}
\newcommand{\hol}{\mathcal{H}}
\newcommand{\R}{\mathbb{R}}
\newcommand{\Fi}{\varphi}
\newcommand{\X}{\mathcal{X}}
\newcommand{\HH}{\mathcal{H}}
\newcommand{\inte}{\textnormal{int}}
\begin{document}
	
	\title[Local spectral theory for subordinated operators]{Local spectral theory for subordinated operators: \\the Ces\`aro operator and beyond}

	\author{Eva A. Gallardo-Guti\'{e}rrez}
	\address{Eva A. Gallardo-Guti\'errez \newline
		Departamento de An\'alisis Matem\'atico y Matem\'atica Aplicada,\newline
		Facultad de Ciencias Matem\'aticas,
		\newline Universidad Complutense de
		Madrid, \newline
		Plaza de Ciencias N$^{\underbar{\Tiny o}}$ 3, 28040 Madrid,  Spain
		\newline
		and Instituto de Ciencias Matem\'aticas ICMAT (CSIC-UAM-UC3M-UCM),
		\newline Madrid,  Spain } \email{eva.gallardo@mat.ucm.es}

	\author{F. Javier Gonz\'alez-Doña}
	\address{F. Javier González-Doña \newline
		Departamento de Matemáticas, \newline
		Escuela Politécnica Superior, \newline
		Universidad Carlos III de Madrid, \newline
		Avenida de la Universidad 30, 28911 Leganés, Madrid, Spain
	\newline
		and Instituto de Ciencias Matem\'aticas ICMAT (CSIC-UAM-UC3M-UCM)
	\newline Madrid,  Spain }
	\email{fragonza@math.uc3m.es}

\thanks{Both authors are partially supported by Plan Nacional  I+D grant no. PID2022-137294NB-I00, Spain,
		the Spanish Ministry of Science and Innovation, through the ``Severo Ochoa Programme for Centres of Excellence in R\&D'' (CEX2019-000904-S \& CEX2023-
001347) and from the Spanish National Research Council, through the ``Ayuda extraordinaria a Centros de Excelencia Severo Ochoa'' (20205CEX001).}

\subjclass[2020]{Primary 47B38,47A11, 47B33}

\date{January 2025, revised July 2025}

\keywords{Ces\`aro operator, subordinated operators, local spectral theory, composition operators}

\begin{abstract}
We study local spectral properties for subordinated operators arising from $C_0$-semi\-groups. Specifically, if $\mathcal{T}=(T_t)_{t\geq 0}$ is a $C_0$-semigroup acting boundedly on a complex Banach space and
$$\HH_\nu = \int_{0}^{\infty} T_t\; d\nu(t)$$
is the subordinated operator associated to $\mathcal{T}$, where $\nu$ is a sufficiently regular complex Borel measure supported on $[0,\infty)$, it is shown that $\HH_\nu$ does not enjoy the \emph{Single Valued Extension Property} (SVEP) and has dense \emph{glocal spectral subspaces} in terms of the spectrum of the generator of $\mathcal{T}$.  Likewise, the adjoint $\HH_\nu^{\ast}$ has trivial spectral subspaces and enjoys the Dunford property. As an application, for the classical Ces\`aro operator $\mathcal{C}$
acting  on the Hardy spaces $H^p$ ($1<p<\infty$),  it follows that the local spectrum of $\mathcal{C}$ at any non-zero $H^p$-function or the spectrum of the restriction of  $\mathcal{C}$ to any of its non-trivial closed invariant subspaces coincides with the spectrum of $\mathcal{C}$.
Finally, we characterize the local spectral properties of subordinated operators arising from hyperbolic semigroups of composition operators acting on $H^p$ ($1<p<\infty$), which will depend only on the geometry of the associated Koenigs domain.
\end{abstract}

\maketitle

\section{Introduction}

Our starting point in this work is the remarkable theorem of Kriete and Trutt \cite{KT} which states that the classical Ces\`aro operator $\mathcal{C}$ is \emph{subnormal} in the Hardy space $H^2$, as well as its counterpart in the Banach space setting $H^p$ by Miller, Miller and Smith \cite{MMS} for $1<p<\infty$ and Persson \cite{Per} for $p=1$, showing that $\mathcal{C}$ is \emph{subdecomposable}. More precisely, in $H^2$ the Ces\`aro operator $\mathcal{C}$  is similar to the restriction of a normal operator to one of its closed invariant subspaces and, analogously, in $H^p$ it is similar to the restriction of a decomposable operator. The decomposable operators were introduced by Foia\c{s} \cite{FOIAS} in the sixties as a generalization of spectral operators in the sense of Dunford and despite the fact that many spectral operators in Hilbert spaces as unitary operators, self-adjoint operators or more generally, normal operators are decomposable,  there are still many questions about their restrictions to closed invariant subspaces  unsettled. We refer to the monograph of Laursen and Neumann  \cite{LN00} for more on the subject.

A classical result of Hardy  \cite{Hardy} yields that the Ces\`aro operator $\mathcal{C}$, which takes a complex sequence $\textbf{a}=(a_0, a_1, a_2\dots )$ to that with $n$-th entry:
$$
(\mathcal{C}\, \textbf{a})_n= \frac{1}{n+1} \sum_{k=0}^n a_k, \qquad (n\geq 0),
$$
is bounded in $\ell^2$. Upon identifying sequences with Taylor coefﬁcients of power series, $\mathcal{C}$ acts \emph{formally} on  $f(z)=\sum_{k=0}^{\infty} a_k z^k$ as
\begin{equation}\label{definition Cesaro}
\mathcal{C}(f)(z)=\sum_{n=0}^\infty \left (\frac{1}{n+1} \sum_{k=0}^n a_k \right ) z^n,
\end{equation}
and turns out to be a bounded operator on the classical Hardy spaces on $H^p$ for $1\leq p <\infty$ (see \cite{Siskakis1} and \cite{Siskakis2}, for instance).

\medskip

In this setting, the theory of $C_0$-semigroups turned out to be a powerful tool for studying the Cesàro operator, as shown in Siskakis's paper \cite{Siskakis1} or Cowen's \cite{Cowen}, which gives an alternative proof that $\mathcal{C}$ is subnormal. More recently, in \cite{GP} the authors made use that the adjoint of the Cesàro operator $\mathcal{C}$ appears as the resolvent operator of the infinitesimal generator of the semigroup of composition operators $(C_{\Fi_t})_{t\geq 0}$ acting on $H^p,$ given by $$\Fi_t(z) = e^{-t}z+(1-e^{-t}), \qquad (z\in \D, \ t\geq 0).$$
That is,
$$
(\mathcal{C}^*f)(z) =  -\int_0^\infty e^{-t} f(e^{-t}z+(1-e^{-t}))dt,
$$
for $f \in H^p$ and $z \in \D.$  This representation of $\mathcal{C}^*$ turns out to be a particular instance of what is called a \textit{subordinated operator} associated to a semigroup. In general, if $\mathcal{T}=(T_t)_{t\geq 0}$ is a $C_0$-semigroup of bounded operators acting on a Banach space $X$, and $\nu$ is a `sufficiently regular' complex Borel measure supported on $[0,\infty)$, the operator
$$\HH_\nu = \int_0^\infty T_t \; d\nu(t)$$
is said to be a subordinated operator to $\mathcal{T}$ (see \cite[Remark 3.3.3]{Haase}, for instance).

\medskip

Recently, the authors of \cite{GGM} undertook a study of the Local Spectral Properties in $H^p$-spaces of composition operators induced by linear fractional maps, characterizing in particular the associated local spectra and the local spectral subspaces, among others (see Section \ref{seccion 2} or \cite{LN00} for definitions). Our main goal in this work is to study the local spectral properties of subordinated operators associated to $C_0$-semigroups. Having in mind that the spectrum of the Cesàro operator $\mathcal{C}$ in $H^p$ is the closed disc
$$\sigma(\mathcal{C}; H^p) = \left \{z\in \mathbb{C}:\, \left |z-\frac{2}{p} \right | \leq \frac{2}{p} \right \},$$
(see \cite{Siskakis1}), as a particular instance, we derive the following result for the local spectra and the local spectral subspaces of $\mathcal{C}$:

\begin{theorem*}
\emph{Let $1<p< \infty$ and $\mathcal{C}$ be  the Cesàro operator acting on the Hardy space $H^p.$ Then:}
	\begin{enumerate}
		\item \emph{ The local spectrum of $\mathcal{C}$ at any non-zero $H^p$-function coincides with the spectrum of $\mathcal{C}$.}
		\item \emph{ The local spectral subspace associated to any closed proper subset $F$ of $\sigma(\mathcal{C}; H^p)$ is trivial, namely, it is the zero subspace.}		
         \item \emph{If $M$ is a non-trivial closed invariant subspace of $\mathcal{C}$, then $\sigma(\mathcal{C}\mid_M; H^p)= \sigma(\mathcal{C}; H^p).$}
	\end{enumerate}
\end{theorem*}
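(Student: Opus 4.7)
The plan is to deduce the three items from the general statements on subordinated operators announced in the abstract, together with the identification recalled in the introduction that realises the adjoint $\mathcal{C}^{\ast}$ as a subordinated operator $\HH_\nu$ attached to the hyperbolic composition semigroup $(C_{\Fi_t})_{t\geq 0}$. Since $H^p$ is reflexive for $1<p<\infty$, taking one further adjoint gives $\HH_\nu^{\ast}=\mathcal{C}$, and the general theorem transfers to $\mathcal{C}$ the properties announced for $\HH_\nu^{\ast}$: every spectral subspace $X_\mathcal{C}(F)$ associated to a closed proper subset $F\subsetneq\sigma(\mathcal{C};H^p)$ is trivial, and $\mathcal{C}$ enjoys the Dunford property. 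In particular $\mathcal{C}$ has the Single Valued Extension Property, so the spectral subspaces admit the usual description $X_\mathcal{C}(F)=\{f\in H^p : \sigma_\mathcal{C}(f)\subseteq F\}$. This is precisely item $(2)$.

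Item $(1)$ then follows by contradiction: if some non-zero $f\in H^p$ satisfied $\sigma_\mathcal{C}(f)\subsetneq\sigma(\mathcal{C};H^p)$, then $f$ would lie in the trivial spectral subspace $X_\mathcal{C}(\sigma_\mathcal{C}(f))$; since the local spectrum is always contained in the spectrum, we conclude $\sigma_\mathcal{C}(f)=\sigma(\mathcal{C};H^p)$ for every non-zero $f$.

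For item $(3)$, let $M$ be a non-trivial closed $\mathcal{C}$-invariant subspace and pick any $f\in M\setminus\{0\}$. Restriction to an invariant subspace can only shrink the local resolvent set, hence
\[
\sigma(\mathcal{C};H^p)=\sigma_\mathcal{C}(f)\subseteq\sigma_{\mathcal{C}|_M}(f)\subseteq\sigma(\mathcal{C}|_M;H^p),
\]
where the first equality is item $(1)$. The reverse inclusion uses the classical fact that $\sigma(\mathcal{C}|_M;H^p)$ is contained in $\sigma(\mathcal{C};H^p)$ together with the bounded components of its complement; as $\sigma(\mathcal{C};H^p)=\{z\in\C:|z-2/p|\leq 2/p\}$ has connected and unbounded complement, no extra points arise and equality holds.

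The substantive content is packaged in the general results on subordinated operators; once those are available, the specialisation to $\mathcal{C}$ is essentially formal. The step requiring the most care is the dual identification of $\mathcal{C}$ with $\HH_\nu^{\ast}$, which depends both on the reflexivity of $H^p$ for $1<p<\infty$ and on the precise duality pairing under which the representation of $\mathcal{C}^{\ast}$ as a subordinated operator has been derived.
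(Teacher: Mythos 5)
Your proposal is correct and follows essentially the same route as the paper: realise $\mathcal{C}^{\ast}$ as a subordinated operator of the hyperbolic semigroup $\Fi_t(z)=e^{-t}z+(1-e^{-t})$, invoke the general theorem for subordinated operators and their adjoints, and then deduce $(1)$ from the triviality of the spectral subspaces and $(3)$ from the Radjavi--Rosenthal full-spectrum inclusion together with the fact that $\sigma(\mathcal{C};H^p)$ is a disc. The paper's only additional content is the explicit verification that the Koenigs domain of this semigroup meets the geometric hypotheses of the general theorem and that the measure $d\nu(t)=e^{-t}\,dt$ is admissible, which you leave implicit.
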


It is worthy to point out the increasing interest in studying (local) spectral properties of the so-called \textit{generalized Cesàro operators}
$$f\in H^p \mapsto \left \{ \begin{array}{ll}
\displaystyle \frac{1}{z} \int_0^z f(s)g'(s)ds, & z\in \mathbb{D}\setminus\{ 0\}, \\
\noalign{\medskip}
f(0)&  z=0;
\end{array} \right.,$$
or the non-normalized version, namely \textit{integration operators}
$$f\in H^p \mapsto \int_{0}^z f(s)g'(s)ds, \qquad (z\in \mathbb{D}),$$
where $g$ is a fixed analytic function in the unit disc.  While these operators turn out to be bounded on $H^p$ whenever $g' \in BMOA$ (see \cite{Aleman}, for instance),  regarding spectral properties or subdecomposability, the  most satisfying results hold assuming that $g'$ is a `small' perturbation of a rational function, and the general questions remain open (see \cite{ AP, APer, LM,Young}, for instance).

\medskip

Subordinated operators in $H^p$  associated to  semigroups (or semiflows) of holomorphic functions  can also be understood as generalizations of Cesàro operators. In this sense, the stated theorem for the Cesàro operator will hold equivalently for a wider class of operators that arise as the adjoints of operators of the form
$$ (\HH_{\nu_\rho}f)(z) = \int_z^1 h'(s) \rho(h(s)-h(z)) f(s) ds, \qquad (z\in \mathbb{D} \text{ and } f\in H^p),$$
where $d\nu_\rho(t) = \rho(t)dt$ is an absolutely continuous measure respect to the Lebesgue measure with  $\rho:[0,\infty)\rightarrow \C$  a sufficiently regular density function and $h$ is the Koenigs function associated to the semigroup (or semiflow) of holomorphic functions $(\Fi_t)_{t\geq 0}$ in $\D$ (see Subsection \ref{subseccion}).

\medskip

We conclude this introduction with an outline of the rest of the manuscript. In Section \ref{seccion 2} we recall some of the preliminaries needed throughout the rest of the text, such as $C_0$-semigroups and their subordinated operators, as well as some background on Local Spectral Theory.
\medskip

In Section \ref{seccion 3} we prove Theorem \ref{main result}, which will allow us to transfer some of the (local) spectral properties of the generator of a $C_0$-semigroup to associated subordinated operators. Indeed, if a weak spectral mapping theorem holds for the $C_0$-semigroup, such properties can be also transferred to the operators of the semigroup (see Remark \ref{remark semigrupo}).

\medskip

In Section \ref{seccion 4} we introduce some of the basic notions of semigroups (or semiflows) of holomorphic functions in $\D$ and derive a result that assures the density of the eigenfunctions of infinitesimal generators of hyperbolic semigroups in terms of the geometric properties of the associated Koenigs domain (Proposition \ref{proposicion densidad}) based on a recent work of Bracci, Gallardo-Guti\'errez  and Yakubovich \cite{BGY}. As a consequence, we prove a result that provides local spectral properties for subordinated operators  depending only on the geometry of the Koenigs domain associated to the hyperbolic semigroup (Theorem \ref{teorema semigrupos}). Finally, we exhibit particular instances of subordinated operators for which the result applies (Subsection \ref{subseccion}) and discuss further applications to  non-hyperbolic semigroups of composition operators.

\section{Preliminaries}\label{seccion 2}

Throughout the rest of the text, $X$ will stand for an infinite-dimensional complex Banach space and $\EL(X)$ for the algebra of linear bounded operators acting on $X$.

Recall that a one-parameter family $\mathcal{T}=(T_t)_{t\geq 0} \subset \EL(X)$ is a \emph{$C_0$-semigroup} if:
\begin{enumerate}
	\item $T_0 = Id_X,$ the identity operator;
	\item $T_{t+s}= T_t\, T_s$ for every $t,\, s \geq 0$;
	\item $T_t \rightarrow Id_X$ when $t\rightarrow 0$ in the strong operator topology.
\end{enumerate}
Every $C_0$-semigroup satisfies
\begin{equation}\label{exponential bound}
\norm{T_t} \leq M e^{\omega t} \qquad \textnormal{for } t\geq 0,
\end{equation}
for some $ M\geq0$ and $\omega \in \R$. The infimum of those $\omega$ such that \eqref{exponential bound} holds for some $M\geq 0$ is called the \emph{type} of the $C_0$-semigroup (see \cite[p. 250]{EN})and can be computed as
$$\omega_0 := \inf\limits_{t>0} \frac{\log (\norm{T_t})}{t}.$$

Given a $C_0$-semigroup $(T_t)_{t\geq0}$ on a Banach space $X $, recall that its generator is the  closed and
densely defined linear operator $\Delta$ defined by
\[
\Delta x=\lim_{t\to 0^+} \frac {T_t x-x} t
\]
with domain $\textnormal{Dom}(\Delta)=\{x\in X : \lim_{t\to 0^+} \frac {T_t x-x} t \enspace \text{exists}\}$. The semigroup is
determined uniquely by its generator.

\medskip

It is a classical fact that the spectrum of $\Delta$ in $X$, namely, those complex numbers $z$ such that $\Delta-zI$ is not invertible in $X$, satisfies
$$\sigma(\Delta) \subseteq \{z\in \C: \PR(z)\leq \omega_0 \}$$
(see \cite[Proposition 2.2]{EN}, for instance).

\bigskip

\noindent \emph{A word about notation.}
For simplicity, the spectrum of an operator $T$ on $X$ is denoted by $\sigma(T)$ unless it is necessary to specify the underlying space, when it is denoted by $\sigma(T; X)$.

\bigskip

For $z \in \C$ with $\PR(z) > \omega_0$, it is possible to express the resolvent operator
$$R(z,\Delta)= -(\Delta-z I)^{-1}$$ in terms of the Laplace transform of $(T_t)_{t\geq 0},$  namely
\begin{equation}\label{laplace transform}
R(z,\Delta)x = \int_{0}^\infty e^{-z t}\, T_t x \, dt \qquad (x\in X),
\end{equation}
(see \cite[p. 231]{EN}). Likewise, the spectral properties of the resolvent operators $R(z,\Delta)$ as well as the operators $T_t$ in the semigroup can be related with the ones of the infinitesimal generator by means of \emph{spectral mapping theorems}. In particular, if $\sigma_p (T)$ denotes the point spectrum of an operator $T$, namely, the set consisting of eigenvalues of $T$, the following holds:
\begin{proposition}\label{spectral mapping resolvente}
Let $X$ be an infinite-dimensional complex Banach space, $(T_t)_{t\geq 0}\subset \EL(X)$ a $C_0$-semigroup and $\Delta$ its infinitesimal generator. Then,
\begin{enumerate}
\item[\rm{(1)}] $\{e^{zt}:z\in \sigma(\Delta)\} \subseteq \sigma(T_t)$ for every $t\geq 0$;
\item[\rm{(2)}] $\{e^{zt}:z\in \sigma_p(\Delta)\} = \sigma_p(T_t)\setminus\{0\}$ for every $t\geq 0$.
\end{enumerate}
Moreover, for every complex number $\lambda$ in the resolvent of $\Delta$, it holds that
\begin{enumerate}
\item[\rm{(3)}] $\sigma(R(\lambda,\Delta)) = \{0\}\cup \{ (\lambda-z)^{-1} : z \in \sigma(\Delta) \}$;
\item[\rm{(4)}] $\{ (\lambda-z)^{-1} : z \in \sigma_p(\Delta) \}\subseteq \sigma_p(R(\lambda,\Delta)).$
\end{enumerate}
\end{proposition}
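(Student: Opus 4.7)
My plan is to split the four assertions into two groups: (1) and (2), which relate the (point) spectrum of each $T_t$ to that of $\Delta$, and (3) and (4), which follow from the resolvent identity together with the Laplace representation \eqref{laplace transform}. For (1), I would rely on the algebraic factorization, valid on $\Dom(\Delta)$ and extending to all of $X$ on one side since $B_z(t)$ maps $X$ into $\Dom(\Delta)$,
$$e^{zt}I - T_t \;=\; (zI - \Delta)\, B_z(t) \;=\; B_z(t)\, (zI - \Delta), \qquad B_z(t) := \int_0^t e^{z(t-s)} T_s\, ds,$$
obtained by integrating $\frac{d}{ds}\bigl[e^{z(t-s)} T_s x\bigr] = e^{z(t-s)}(\Delta - zI) T_s x$ from $0$ to $t$ for $x \in \Dom(\Delta)$, using that $T_s$ commutes with $\Delta$ on $\Dom(\Delta)$. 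If $e^{zt} \in \rho(T_t)$, then since $B_z(t)$ commutes with $(e^{zt}I - T_t)^{-1}$, the two factorizations produce a common two-sided bounded inverse of the closed operator $zI - \Delta$, so $z \in \rho(\Delta)$; contraposing yields (1). The easy inclusion in (2) follows because, whenever $\Delta x = zx$, both curves $t \mapsto e^{zt} x$ and $t \mapsto T_t x$ solve the abstract Cauchy problem $u'(t) = \Delta u(t)$, $u(0) = x$, and therefore coincide.

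The step I expect to be the main obstacle is the reverse inclusion in (2), since it asks for an eigenvector of the (generally unbounded) generator $\Delta$ starting from an eigenvector of a single bounded operator $T_{t_0}$. My approach would be Fourier averaging over a period: given $T_{t_0} x = e^{z t_0} x$ with $x \neq 0$, the $X$-valued map $\varphi(s) := e^{-zs} T_s x$ is continuous and $t_0$-periodic, and its Fourier coefficients
$$x_k \;:=\; \frac{1}{t_0} \int_0^{t_0} e^{-z_k s}\, T_s x\, ds, \qquad z_k := z + \frac{2\pi i k}{t_0} \quad (k \in \mathbb{Z}),$$
satisfy, after the change of variable $s \mapsto s + t$ combined with $e^{-z_k t_0} e^{z t_0} = 1$, the exponential law $T_t x_k = e^{z_k t} x_k$ for every $t \geq 0$; differentiating at $t = 0$ places $x_k \in \Dom(\Delta)$ with $\Delta x_k = z_k x_k$. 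If every $x_k$ vanished, then $\varphi$ would be a continuous function with all Fourier coefficients zero (verified scalarly by composing with $x^* \in X^*$) and hence identically zero, contradicting $\varphi(0) = x \neq 0$; thus some $z_k$ lies in $\sigma_p(\Delta)$ and satisfies $e^{z_k t_0} = e^{z t_0}$.

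For (3) and (4) I would exploit the resolvent identity, valid for $\lambda \in \rho(\Delta)$ and $z \neq \lambda$,
$$(\lambda - z)^{-1} I - R(\lambda, \Delta) \;=\; (\lambda - z)^{-1} R(\lambda, \Delta) (zI - \Delta) \;=\; (\lambda - z)^{-1} (zI - \Delta) R(\lambda, \Delta),$$
obtained by direct expansion of $R(\lambda, \Delta)\bigl[I - (\lambda - z)^{-1}(\lambda I - \Delta)\bigr]$ on $\Dom(\Delta)$ and by using that $R(\lambda, \Delta)$ maps $X$ into $\Dom(\Delta)$. Since $R(\lambda, \Delta)$ is a bijection $X \to \Dom(\Delta)$, the operator $(\lambda - z)^{-1} I - R(\lambda, \Delta)$ is invertible in $\EL(X)$ exactly when $zI - \Delta$ is, which yields the non-zero part of (3); the inclusion of $0$ in $\sigma(R(\lambda,\Delta))$ is forced whenever $\Delta$ is unbounded, since otherwise $R(\lambda, \Delta)^{-1} = \lambda I - \Delta$ would itself be bounded. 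Finally, for (4), if $\Delta x = zx$ then applying $R(\lambda, \Delta)$ to the identity $(\lambda I - \Delta) x = (\lambda - z) x$ gives $R(\lambda, \Delta) x = (\lambda - z)^{-1} x$, exhibiting $(\lambda - z)^{-1}$ as an eigenvalue of $R(\lambda, \Delta)$ with the same eigenvector.
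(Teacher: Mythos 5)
Your proposal is correct. The paper gives no proof of this proposition --- it simply cites \cite[Chapter IV, Sections 3.6 and 3.7]{EN} and \cite[Theorem 6.4 and Corollary 6.6]{Haase1} --- and your argument is precisely the standard one from those references: the factorization $e^{zt}I-T_t=(zI-\Delta)B_z(t)=B_z(t)(zI-\Delta)$ for the spectral inclusion, Fourier averaging of the periodic orbit $s\mapsto e^{-zs}T_sx$ for the point-spectrum identity, and the resolvent identity for (3) and (4). The only point worth flagging is that you correctly prove $0\in\sigma(R(\lambda,\Delta))$ only when $\Delta$ is unbounded, which is in fact the right statement; the proposition as written is (harmlessly) imprecise for bounded generators, and likewise (2) at $t=0$ is only meaningful when $\sigma_p(\Delta)\neq\emptyset$ --- neither issue is a gap in your argument.
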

\noindent We refer to  \cite[Chapter IV, Sections 3.6 and 3.7]{EN} and \cite[Theorem 6.4 and Corollary 6.6]{Haase1}, for instance.

\medskip

The equation \eqref{laplace transform} provides particular instances of subordinated operators associated to a semigroup according to the following:

\begin{definition}
Let $\nu$ be a finite complex Borel measure in $[0,+\infty)$ such that $\int_{0}^\infty e^{(\omega_0+\delta)t}\, d|\nu|(t) < \infty$ for some $\delta >0$ and
$\mathcal{T}=(T_t)_{t\geq 0}\subset \EL(X)$ a $C_0$-semigroup. The subordinated operator $\HH_\nu$ associated to $\mathcal{T}$ is given by
\begin{equation}\label{def sub}
\HH_\nu x = \int_0^\infty \, T_t  x \ d\nu(t), \qquad (x \in X).
\end{equation}
\end{definition}

\smallskip

Before going further, let us point out that \eqref{def sub} defines a linear bounded operator in $X$ because the integral is Bochner-convergent. Clearly, the absolute continuous measure $d\nu_{\lambda}(t) = e^{-\lambda t}dt$ where $\lambda \in \mathbb{C}$ with $\PR(\lambda) > \omega_0$ yields that $\HH_{\nu_{\lambda}}=R(\lambda,\Delta).$

\medskip

\subsection{Sectorial operators}

In order to study the local spectral properties of subordinated operators, we recall some spectral mapping theorems in the lines of Proposition \ref{spectral mapping resolvente}. For such a purpose, we introduce a \emph{toolkit} regarding  functional calculus for sectorial operators and refer the reader to the monograph \cite{Haase}.

\medskip

Given $\nu$  a finite complex Borel measure in $[0,\infty)$ of bounded variation, the \emph{Laplace transform} of $\nu$ is given by
$$\EL(\nu)(z)= \int_0^\infty e^{-z t}\, d\nu(t),$$
where $z \in \C$ with $\PR(z) \geq 0$. It is a classical fact that $\EL(\nu)$ is a holomorphic function in the right-half plane $\{z\in \C : \PR(z) > 0\}$ and continuous in its closure.

\medskip

Given $0\leq \omega \leq \pi$, let $S_\omega$ be the sector in the complex plane given by
$$
S_\omega = \left\{ \begin{matrix}
&\{z\in \C\setminus \{0\}:\, \ |\arg z| < \omega \} & \textnormal{if} & \omega \in (0,\pi]  \\
&\{z\in \C\setminus \{0\}:\, \ \Impart(z)=0, \Rpart(z)>0 \} & \textnormal{if} & \omega = 0.
\end{matrix} \right.$$

Recall that a (possibly unbounded operator) $\Delta$ acting on $X$ is called \emph{sectorial of angle $\omega \in [0,\pi)$}, denoted by $\Delta \in \textnormal{Sect}(\omega)$ if it satisfies
\begin{enumerate}
	\item [(i)] $\sigma(\Delta)\subset \overline{S_\omega}$ and
	\item [(ii)] $M(A,\omega') := \sup \{\norm{\lambda\,  R(\lambda,\Delta)}: \lambda \in \C\setminus \overline{S_{\omega'}} \}  <\infty$ for all $\omega'\in (\omega,\pi)$.
\end{enumerate}

It is a standard fact that if $\Delta$ is the infinitesimal generator of a uniformly bounded semigroup of operators, that is,
$$\sup\limits_{t\geq 0}\norm{T_t}<\infty,$$
then $-\Delta$ is a sectorial operator of angle $\pi/2.$ In particular, if $(T_t)_{t\geq 0} \subset \EL(X)$ is a $C_0$-semigroup of type $\omega_0 \in \R$ and $\Delta$ is its infinitesimal generator, the semigroup
$$(e^{-(\omega_0+\varepsilon)t}\, T_t)_{t\geq 0}$$
is a uniformly bounded semigroup for every $\varepsilon>0$, so $(\omega_0+\varepsilon)I-\Delta$ is sectorial of angle $\pi/2.$

\medskip

Now, we introduce a functional calculus for sectorial operators of angle $\pi/2$. First, we recall the following:

\begin{definition}
Let $A$ be  a sectorial operator of angle $\pi/2$ in $X$ and $\omega \in (\pi/2,\pi)$. The domain of the functional calculus of $A$, denoted by $\mathcal{E}(A)$, consists of  holomorphic functions $f: S_\omega \rightarrow \C$  such that the limits
$$ \lim\limits_{z\rightarrow 0} f(z) :=d_0 \qquad \lim\limits_{z\rightarrow \infty} f(z) :=d_\infty$$
are finite and satisfy that for some $r,R>0$, that the integrals
$$ \int_{\partial(S_{\omega'}\cap\{|z|<r\})} \left| \frac{f(z)-d_0}{z}\right| |dz| <\infty, \qquad \int_{\partial(S_{\omega'}\cap\{|z|>R\})} \left| \frac{f(z)-d_\infty}{z}\right| |dz| <\infty $$ converges for every $\omega'\in (\pi/2, \omega).$
\end{definition}
Here $\partial(S_{\omega'}\cap\{|z|<r\})$ denotes the boundary of the set $S_{\omega'}\cap\{|z|<r\}$.

\smallskip

Given $A$  a sectorial operator of angle $\pi/2$ in $X$ and $f\in \mathcal{E}(A)$, the operator $f(A)$ may be defined as
$$f(A)= d_\infty + d_0(I+A)^{-1} + \int_{\partial S_{\omega'}} (f(z)-d_\infty-\frac{d_0}{z+1})(z I-A)^{-1}dz,$$
where $\omega' \in (\pi/2,\pi).$ Note that this definition does not depend on the selected $\omega'$.

\medskip

In particular, if $\mathcal{T}=(T_t)_{t\geq 0}$ is a $C_0$-semigroup with infinitesimal generator $\Delta$ of type $w_0\in \R$, the operator $(\omega_0+\varepsilon)I-\Delta$ is a sectorial operator of angle $\pi/2$, so the functional calculus defined applies.

In order to simplify the notation, and following that of \cite[p. 27]{AO}, we write
$f \in \mathcal{E}(-\Delta)$ if $f_{\omega_0+\varepsilon} \in \mathcal{E}((\omega_0+\varepsilon)I-\Delta),$ where
$f_{\omega_0+\varepsilon}(z) = f(z-\omega_0-\varepsilon)$. In such a case, we define
$$f(-\Delta)= f_{\omega_0+\varepsilon}((\omega_0+\varepsilon)I-\Delta).$$

\medskip

With this notation, we can state the desired Spectral Mapping Theorem for our purposes:

\begin{theorem}\label{teorema spectral mapping}
Let $\mathcal{T}=(T_t)_{t\geq 0} \subset \EL(X)$ be a $C_0$-semigroup of operators with infinitesimal generator $\Delta$ of type $w_0\in \R$.  Let $\nu$ be a finite complex Borel measure in $[0,\infty)$ such that $\int_{0}^\infty e^{(\omega_0+\delta)t} \, d|\nu|(t) < \infty$ for some $\delta > 0.$
Let $\HH_{\nu}$ be the subordinated operator associated to $\mathcal{T}$ and assume that $\EL(\nu) \in \mathcal{E}(-\Delta).$ Then the following assertions hold:
\begin{enumerate}
\item[\rm{(1)}] The spectrum of $\HH_{\nu}$ is
\begin{equation*}\label{contenciones espectrales1}
\sigma(\HH_\nu) = \EL(\nu)(\tilde{\sigma}(-\Delta))
\end{equation*}
where $\tilde{\sigma}(-\Delta) = \sigma(-\Delta)\cup\{\infty\}$.
\item[\rm{(2)}] The point spectrum of $\HH_{\nu}$ satisfies
\begin{equation*}\label{contenciones espectrales2}
\EL(\nu)(\sigma_p(-\Delta))\subseteq \sigma_p(\HH_\nu) \subseteq \EL(\nu)(\sigma_p(-\Delta))\cup \EL(\nu)(M),
\end{equation*}
where $M = \{0,\infty\}\cap \tilde{\sigma}(-\Delta).$
\end{enumerate}
Moreover, $\ker(-\Delta-\lambda I) \subseteq \ker(\HH_\nu -\EL(\nu)(\lambda)\,I)$ for every $\lambda \in \sigma_p(-\Delta).$
\end{theorem}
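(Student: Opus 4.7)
The plan is to identify the subordinated operator $\HH_\nu$ with $\EL(\nu)(-\Delta)$ via the Hille--Phillips (sectorial) functional calculus, and then invoke the spectral mapping theorem for that calculus. To set this up, fix $\varepsilon \in (0,\delta)$ and rescale: the semigroup $S_t := e^{-(\omega_0+\varepsilon)t}T_t$ is uniformly bounded with generator $-A$, where $A := (\omega_0+\varepsilon)I - \Delta \in \textnormal{Sect}(\pi/2)$, and the rescaled measure $d\tilde\nu(t):=e^{(\omega_0+\varepsilon)t}d\nu(t)$ is a finite Borel measure on $[0,\infty)$ by the growth hypothesis on $\nu$. A direct cancellation of the exponential factors yields $\HH_\nu = \int_0^\infty S_t \, d\tilde\nu(t)$, while $g(z):= \EL(\tilde\nu)(z) = \EL(\nu)(z-\omega_0-\varepsilon)$ lies in $\mathcal{E}(A)$ precisely because $\EL(\nu)\in \mathcal{E}(-\Delta)$ by hypothesis; moreover $g(A)=\EL(\nu)(-\Delta)$ by the very definition of the latter.

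The crux is then to establish the Hille--Phillips identification
\[ g(A) \;=\; \int_0^\infty S_t \, d\tilde\nu(t) \;=\; \HH_\nu, \]
which extends the resolvent formula $R(z,A)=\int_0^\infty e^{-zt}S_t\,dt$ from the exponential measure $e^{-zt}dt$ to an arbitrary finite $\tilde\nu$ (see \cite{Haase}, Section 3.3). To prove it I would start from the contour-integral definition of $g(A)$, substitute the Laplace representation of each resolvent $R(z,A)y$ (valid for $\Rpart(z)>0$ since $(S_t)$ is uniformly bounded), and apply Fubini to exchange the contour integral and the $dt$-integral, recognizing $\tfrac{-1}{2\pi i}\int_{\partial S_{\omega'}} e^{-zt}R(z,A)\,dz$ as the Dunford representation of $S_t$. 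This is the step I expect to be the most delicate: one must carefully handle the boundary correction terms $d_\infty$ and $d_0(I+A)^{-1}$ that appear in the definition of $g(A)$, and justify the Fubini exchange between an unbounded contour integral against operator-valued resolvents and the finite Bochner integral against $d\tilde\nu$.

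With the identification $\HH_\nu = g(A)$ in hand, part (1) follows from the spectral mapping theorem for the sectorial calculus \cite{Haase}: $\sigma(\HH_\nu)=g(\tilde\sigma(A))$, and using $\sigma(A)=\sigma(-\Delta)+(\omega_0+\varepsilon)$ together with $g(z)=\EL(\nu)(z-\omega_0-\varepsilon)$, this reads $\sigma(\HH_\nu)=\EL(\nu)(\tilde\sigma(-\Delta))$. For part (2), the lower inclusion $\EL(\nu)(\sigma_p(-\Delta))\subseteq \sigma_p(\HH_\nu)$ together with the eigenvector statement $\ker(-\Delta-\lambda I)\subseteq \ker(\HH_\nu-\EL(\nu)(\lambda)I)$ are immediate by a direct computation on eigenvectors: if $(-\Delta)x=\lambda x$, uniqueness for the Cauchy problem $u'(t)=\Delta u(t)$, $u(0)=x$, forces $T_t x = e^{-\lambda t}x$, and hence $\HH_\nu x = \int_0^\infty e^{-\lambda t}x\,d\nu(t) = \EL(\nu)(\lambda)x$. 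The remaining upper inclusion in (2) follows from the point-spectral version of the mapping theorem applied to $g(A)$ in the sectorial calculus, after the same spectral translation.
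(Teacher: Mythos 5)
Your proposal is correct and follows essentially the same route as the paper: identify $\HH_\nu$ with $\EL(\nu)(-\Delta)$ via the sectorial (Hille--Phillips) functional calculus --- the paper simply cites \cite[Proposition 3.3.2]{Haase} for the identification you propose to re-derive by contour integration and Fubini --- and then apply the spectral mapping theorems for that calculus \cite{Haase1} to obtain (1) and (2). The concluding eigenvector computation also matches the paper's (which invokes the point spectral mapping theorem for the semigroup where you use uniqueness for the abstract Cauchy problem, the same fact).
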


\begin{proof}
An argument along the lines of  \cite[Corollary 5.3]{AO} yields that $\EL(\nu)\in \mathcal{E}(-\Delta).$ Moreover, $\HH_\nu = \EL(\nu)(-\Delta)$ by \cite[Proposition 3.3.2]{Haase},  so both statements (1) and (2) in Theorem \ref{teorema spectral mapping}  follows by \cite[Theorem 6.4 and Corollary 6.6]{Haase1}.

Finally, let $\lambda \in \sigma_p(-\Delta)$ and $x \in \ker(-\Delta-\lambda I).$ By Proposition \ref{spectral mapping resolvente}, $x \in \ker(T_t-e^{-\lambda t}).$ Since, $$\HH_\nu x = \int_0^\infty \, T_t x \, d\nu(t) = x \int_0^\infty e^{-\lambda t}d\nu(t) = \EL(\nu)x,$$ the last statement of Theorem \ref{teorema spectral mapping} holds and the proof follows.
\end{proof}


It is worthy to remark that as a consequence of \cite[Lemma 5.2 and Corollary 5.3]{AO}, next proposition yields a relatively easy way to provide measures $\nu$ satisfying the hypothesis of the previous result.

\begin{proposition} Let $\nu$ be a finite complex Borel measure on $[0,\infty)$ such that $\int_{0}^\infty e^{(\omega_0+\delta)t} |d\nu|(t) < \infty$ for some $\delta > 0.$ Assume $d\nu(t) = h(t)dt,$ where $h$ can be extended holomorphically to a sector $S_{\omega}$ with $\omega \in (0,\pi/2]$. Suppose further that there exist $\eta \in (0,1]$ and $\xi\in(0,1)$ such that
$$
\sup_{z\in S_\xi\cap\{|z|\leq 1\}} |z^{1-\eta}h(z)| < \infty \qquad \sup_{z\in S_\xi\cap\{|z|\geq 1\}} |z^{1+\xi}e^{(\omega_0+\delta)z}h(z)| < \infty
$$ for all $0<\varepsilon<\omega.$  Then, $\EL(\nu) \in \mathcal{E}(-\Delta)$.
\end{proposition}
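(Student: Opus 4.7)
The plan is to verify the three defining conditions of the functional calculus domain $\mathcal{E}((\omega_0+\varepsilon)I-\Delta)$ for the shifted Laplace transform $F(z) := \EL(\nu)(z-\omega_0-\varepsilon)$, fixing $0<\varepsilon<\delta$: namely, holomorphic extension to a sector $S_{\omega^\ast}$ with $\omega^\ast>\pi/2$, existence of finite limits at $0$ and at $\infty$ inside that sector, and the two integrability conditions along the boundary of smaller subsectors.

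First I would establish the holomorphic extension. For $\Rpart(z)>\omega_0+\varepsilon$ the integral $\EL(\nu)(z)=\int_0^\infty e^{-zt}h(t)\,dt$ converges absolutely by the exponential moment hypothesis on $\nu$. To enlarge the domain I rotate the integration contour: for each $\theta\in(-\xi,\xi)$, Cauchy's theorem on a pie-slice contour lets us replace the ray $[0,\infty)$ by $e^{i\theta}[0,\infty)$, provided the arcs at $0$ and at $\infty$ contribute negligibly. The bound $|z^{1-\eta}h(z)|\le C$ on $S_\xi\cap\{|z|\le 1\}$ forces the inner arc integral to be $O(r^{\eta})\to 0$, while $|z^{1+\xi}e^{(\omega_0+\delta)z}h(z)|\le C$ on $S_\xi\cap\{|z|\ge 1\}$ forces the outer arc integral to vanish as the radius tends to $\infty$ (after combining with the factor $e^{-zt}$, which remains bounded in $z$ and $t$ on the arc once $\Rpart(z)$ is large enough). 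Each rotated integral converges absolutely on the half-plane $\Rpart(ze^{i\theta})>-(\omega_0+\delta)\cos\theta$, and taking the union over $\theta\in(-\xi,\xi)$ produces a holomorphic extension of $\EL(\nu)$ to a sector of opening $\pi/2+\xi$ around the positive real axis, properly shifted. Translating by $-\omega_0-\varepsilon$, this gives $F$ holomorphic on $S_{\omega^\ast}$ with $\omega^\ast=\pi/2+\xi>\pi/2$.

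Next, I would check the limits. As $z\to\infty$ in any proper subsector $S_{\omega'}$ with $\omega'\in(\pi/2,\omega^\ast)$, dominated convergence applied to the rotated representation yields $F(z)\to 0$, so $d_\infty=0$. The limit at $0$ equals $F(0)=\int_0^\infty e^{(\omega_0+\varepsilon)t}h(t)\,dt$, which is finite by the moment assumption. For the integrability conditions, the growth control at infinity transfers to a polynomial decay $|F(z)|=O(|z|^{-\xi})$ on $S_{\omega'}$, so $\int_{\partial(S_{\omega'}\cap\{|z|>R\})}|F(z)-d_\infty|\,|z|^{-1}|dz|<\infty$; similarly the bound $|z^{1-\eta}h(z)|\le C$ gives a H\"older-type estimate $|F(z)-F(0)|=O(|z|^{\eta})$ near $z=0$, so $\int_{\partial(S_{\omega'}\cap\{|z|<r\})}|F(z)-d_0|\,|z|^{-1}|dz|<\infty$. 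This completes the verification that $F\in\mathcal{E}((\omega_0+\varepsilon)I-\Delta)$, i.e. $\EL(\nu)\in\mathcal{E}(-\Delta)$.

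The main technical obstacle is the clean justification of the contour rotation and the resulting decay/regularity estimates, where the two hypotheses on $\eta$ and $\xi$ play dual roles (behavior at $0$ and at $\infty$, respectively). This is exactly the content packaged in \cite[Lemma 5.2]{AO}, after which \cite[Corollary 5.3]{AO} assembles the estimates into membership in $\mathcal{E}(-\Delta)$; in practice, one invokes those two results rather than redoing the arc estimates from scratch.
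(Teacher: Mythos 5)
Your proposal matches the paper's treatment: the paper offers no proof of this proposition at all beyond the remark that it is ``a consequence of \cite[Lemma 5.2 and Corollary 5.3]{AO}'', which is precisely the citation your argument ultimately reduces to, so the approach is essentially the same. Your reconstruction of the contour rotation and the limit/integrability checks is sound in outline (with the minor caveat that the roles are dual to how you state them at the end: the bound on $h$ near $t=0$ governed by $\eta$ controls the decay of $\EL(\nu)$ at infinity, while the bound at $t=\infty$ governed by $\xi$ controls the regularity of $\EL(\nu)$ at the origin, so the exponents in your two final estimates should be interchanged, though either positive exponent suffices for the integrability conditions).
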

	
We conclude this subsection with a basic fact about adjoints of $C_0$-semigroups of operators. For such a task, recall that if  \( Z \) is a closed operator with dense domain \( \mathcal{D} \) in a Banach space $X$, the \emph{adjoint} \( Z^* \) with domain \( \mathcal{D}^* \subseteq X^* \) is defined as follows:
\( \phi \in \mathcal{D}^* \) if and only if the linear functional \( f \mapsto \langle Zf, \phi \rangle \) is norm bounded on \( \mathcal{D} \);
if \( \phi \in \mathcal{D}^* \), then \( Z^* \phi \) is the unique bounded extension of that linear functional to $X$.
Hence
\[
\langle Zf, \phi \rangle = \langle f, Z^* \phi \rangle
\]
for all \( f \in \mathcal{D} \) and \( \phi \in \mathcal{D}^* \) (see \cite{Davies}).

\begin{theorem}\cite[Theorem 1.34]{Davies}
Let $X$ be a reflexive Banach space  and $(T_t)_{t\geq 0} \subseteq \EL(X)$ a $C_0$-semigroup with infinitesimal generator $\Delta.$ Then, $(T_t^*)_{t\geq 0}\subset \EL(X^*)$ is also a $C_0-$semigroup whose infinitesimal generator is $\Delta^*.$
\end{theorem}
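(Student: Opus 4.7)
The plan is to establish three things: the algebraic semigroup laws for $(T_t^*)$, the strong continuity at $t=0$, and finally the identification $\Delta^*$ as the generator. The algebraic laws come for free: taking adjoints in $T_0=Id_X$ and $T_{t+s}=T_tT_s$ gives $T_0^*=Id_{X^*}$ and $T_{t+s}^*=T_s^*T_t^*=T_t^*T_s^*$ (the order reversal is harmless since both factors commute inside the semigroup). Moreover, the duality $\|T_t^*\|=\|T_t\|\leq Me^{\omega t}$ provides the exponential bound automatically.

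The main technical obstacle is \emph{strong} continuity of $(T_t^*)$ at $0$, which is where reflexivity enters crucially. For every $\phi\in X^*$ and $x\in X$,
\[
\langle T_t^*\phi-\phi,x\rangle=\langle\phi,T_tx-x\rangle\longrightarrow 0\qquad (t\to 0^+),
\]
so $T_t^*\phi\to\phi$ in the weak-$*$ topology. Since $X$ is reflexive, the weak-$*$ and weak topologies on $X^*$ agree, hence $(T_t^*)$ is weakly continuous at $0$. To upgrade weak continuity to strong continuity, I would introduce the weak (Bochner in the reflexive setting) integrals $\psi_s=\tfrac1s\int_0^sT_u^*\phi\,du$; the local boundedness of $\|T_t^*\|$ together with weak continuity makes these well-defined, and a direct computation shows $T_t^*\psi_s-\psi_s=\tfrac1s\int_t^{t+s}T_u^*\phi\,du-\tfrac1s\int_0^tT_u^*\phi\,du$, which tends to $0$ in norm as $t\to 0^+$ by continuity of the integral. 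Thus the vectors $\psi_s$ furnish a dense subspace of $X^*$ on which strong continuity holds, and a standard $\varepsilon/3$-argument together with the uniform bound $\|T_t^*\|\leq Me^{\omega t}$ extends strong continuity to all of $X^*$.

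Once $(T_t^*)$ is known to be a $C_0$-semigroup, denote its generator by $B$ and proceed to show $B=\Delta^*$. For $\phi\in\Dom(B)$ and $x\in\Dom(\Delta)$,
\[
\langle B\phi,x\rangle=\lim_{t\to0^+}\left\langle \frac{T_t^*\phi-\phi}{t},x\right\rangle=\lim_{t\to0^+}\left\langle \phi,\frac{T_tx-x}{t}\right\rangle=\langle \phi,\Delta x\rangle,
\]
which forces $\phi\in\Dom(\Delta^*)$ with $\Delta^*\phi=B\phi$, hence $B\subseteq \Delta^*$. For the reverse inclusion I would fix $\lambda$ with $\Rpart(\lambda)>\omega_0$; then $\lambda\in\rho(\Delta)\cap\rho(B)$, and by the Laplace representation \eqref{laplace transform} combined with Fubini/duality one has $R(\lambda,B)=R(\lambda,\Delta)^*$. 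On the other hand the standard duality of resolvents for closed densely defined operators gives $R(\lambda,\Delta^*)=R(\lambda,\Delta)^*$ as well. Thus $R(\lambda,B)=R(\lambda,\Delta^*)$, which combined with $B\subseteq\Delta^*$ yields $B=\Delta^*$, completing the proof.

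The delicate step is clearly the promotion of weak-$*$ continuity to strong continuity; everything else is formal manipulation with adjoints and resolvents. In particular, reflexivity is used only at that point, but in an essential way, since for non-reflexive $X$ the adjoint semigroup need not be strongly continuous (classical counterexample: translation semigroups on $L^\infty(\mathbb{R})$).
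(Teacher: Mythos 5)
The paper does not prove this statement at all: it is imported verbatim as \cite[Theorem 1.34]{Davies}, so there is no internal proof to compare against. Your argument is the standard textbook proof of that theorem and is essentially correct: weak-$*$ continuity is automatic, reflexivity upgrades it to weak continuity, the Ces\`aro averages $\psi_s=\tfrac1s\int_0^sT_u^*\phi\,du$ give a dense set of vectors of strong continuity, and the generator is identified via $B\subseteq\Delta^*$ together with the resolvent identity $R(\lambda,B)=R(\lambda,\Delta)^*=R(\lambda,\Delta^*)$. Two small points to tighten. First, your displayed decomposition is off by a bookkeeping error: $T_t^*\psi_s-\psi_s=\tfrac1s\int_t^{t+s}T_u^*\phi\,du-\tfrac1s\int_0^{s}T_u^*\phi\,du=\tfrac1s\int_s^{t+s}T_u^*\phi\,du-\tfrac1s\int_0^{t}T_u^*\phi\,du$ (you wrote $\int_0^t$ paired with $\int_t^{t+s}$); the conclusion is unaffected, since either way one has two integrals over intervals of length $t$ with locally bounded integrand, hence a norm bound of order $t/s$. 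Second, the density of $\operatorname{span}\{\psi_s\}$ deserves a sentence: $\psi_s\to\phi$ in the weak topology as $s\to0^+$, and a norm-closed subspace is weakly closed, so the closed span contains every $\phi$. It is also worth noting explicitly that reflexivity is used a second time, implicitly, in the generator identification: it guarantees that $\Delta^*$ is densely defined (so that $R(\lambda,\Delta^*)$ and the inclusion $B\subseteq\Delta^*$ make sense as statements about a bona fide closed densely defined operator). With those touches the proof is complete.
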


\subsection{Local Spectral Theory} In this subsection we will recall some of the basic features of Local Spectral Theory and refer the reader to the monograph by Laursen and Neumann \cite{LN00}.

\medskip

Given $T\in \EL(X)$ and $x \in X$, the \emph{local resolvent} of $T$ at $x$, denoted as $\rho_T(x)$, is defined as the union of all open sets $U\subset \C$ such that there exists an analytic function $f_x : U \rightarrow X$ satisfying
\begin{equation}\label{local resolvent}
(T-zI)f_x(z) = x \qquad (z\in U).
\end{equation}
In particular, $\rho_T(x)$ is an open set that contains the resolvent of the operator $\rho(T)$. An operator $T\in \EL(X)$ is said to have the \emph{single-valued extension property} (SVEP) if for every open set $U$ and every analytic function $f_x:U\rightarrow X$ the equality
$$(T-zI)f_x(z) = 0 \qquad (z\in U)$$
implies that $f_x\equiv 0.$ This property assures that the analytic function solving \eqref{local resolvent} is unique, so one can define the \emph{local resolvent function} of $T$ at $x$ as the unique analytic map $f_x:\rho_T(x)\rightarrow X$ such that \eqref{local resolvent} holds.

\medskip

The \emph{local spectrum} of $T\in \EL(X)$ at $x\in X$ is defined as $\sigma_T(x)= \C\setminus \rho_T(x).$ This set is always a (possibly empty) compact subset of the spectrum $\sigma(T)$. Having in mind the spectral radius formula
$$r(T) = \limsup\limits_{n\rightarrow \infty} \norm{T^n}^{1/n},$$
the \textit{local spectral radius} of $T$ at $x$ is defined by
$$r_T(x) := \limsup\limits_{n\rightarrow\infty} \norm{T^nx}^{1/n}.$$
If $T$ has the SVEP, a local spectral radius formula satisfies:
\begin{equation}\label{local spectral radius formula}
r_T(x) = \max\{|\lambda| : \lambda \in \sigma_T(x) \}, \qquad (x\in X\setminus\{0\}),
\end{equation}
(see \cite[Proposition 3.3.13]{LN00}).

The definition of the local spectrum allows us to introduce the local spectral subspaces, or simply, the spectral subspaces associated to $T$.

For any set $A\subset \C$, the \emph{spectral subspace} of $T$ associated to $A$, denoted as $X_T(A)$, is defined as
$$X_T(A)= \{x \in X : \sigma_T(x)\subseteq A\}.$$
In other words, $X_T(A)$ consists of those  vectors $x\in X$ for which $(T-zI)$ is locally invertible for $z\in \C\setminus A.$ We note that $X_T(A) = X_T(A\cap\sigma(T))$  for every $A\subset \C.$

The spectral subspaces turn out to be (non-necessarily closed) subspaces which are hyperinvariant by $T$, namely, invariant under every operator in the commutant of $T$. If all the spectral subspaces of an operator $T$ associated to closed sets are closed, then $T$ is said to enjoy the \emph{Dunford property (C)}. Clearly, the Dunford property implies the SVEP.

\medskip

We also recall the \emph{glocal spectral subspaces} $\X_T(F)$, which are a variant of spectral subspaces that in general are better suited for operators lacking the SVEP. If $F\subset \C$ is a closed set, the glocal spectral subspace $\X_T(F)$ consists of all $x\in X$ such that there exists an analytic function $f_x:\C\setminus F \rightarrow X$ satisfying
$$(T-zI)f_x(z) = x, \qquad (z\in \C\setminus F).$$
If $T$ has the SVEP, then $X_T(F) = \X_T(F)$ for every closed set $F\subset \C$. In general, one has the inclusion $\X_T(F)\subseteq X_T(F)$.

\medskip

Glocal spectral subspaces are also of interest when dealing with adjoint of operators. Namely, if $X$ is a complex Banach space and $T\in \EL(X)$,  for every disjoint closed sets $F_1,F_2 \subseteq \C$, it follows:
\begin{equation}\label{contenciones espectrales}
 \X^*_{T^*}(F_1)\subseteq \X_T(F_2)^\perp,
\end{equation}
 (see \cite[Proposition 2.5.1]{LN00}).

\medskip

\section{Transferring the (local) spectral properties of infinitesimal generators}\label{seccion 3}

In this section we prove Theorem \ref{main result} which  will allow us to transfer some of the (local) spectral properties of the infinitesimal generator $\Delta$ of a $C_0$-semigroup $\mathcal{T}=(T_t)_{t\geq 0}$ to subordinated operators $\HH_\nu$ associated to $\mathcal{T}$, as well as for their adjoints.
The result extends the conclusions of Theorem 2.1 in \cite{GGM} to the context of a $C_0$-semigroups and reads as follows:

\begin{theorem}\label{main result}
Let $X$ be a complex Banach space and $\mathcal{T}=(T_t)_{t\geq 0} \subset \EL(X)$ a $C_0$-semigroup of type $w_0\in \R$  and infinitesimal generator $\Delta$. Assume that $\overline{\inte(\sigma(-\Delta))} = \sigma(-\Delta)$ is connected and that there exists a non-vanishing analytic map $f_\Delta : \inte(\sigma(-\Delta)) \rightarrow \textnormal{Dom}(-\Delta)$ such that
$$(-\Delta-zI)f_\Delta(z) = 0 \qquad \text{ for } z\in \inte(\sigma(-\Delta)).$$
Assume further that the linear span
$$\Span\{ f_\Delta(\lambda) : \lambda \in \inte(\sigma(-\Delta)) \}$$
is dense in $X$. Let $\nu$ be a finite complex Borel measure supported in $[0,\infty)$ such that $\int_{0}^\infty e^{(\omega_0+\delta)t}d|\nu|(t)<\infty$ for some $\delta > 0$ and $\EL(\nu)$ is non-constant in  $\mathcal{E}(-\Delta)$. Then the subordinated operator $\HH_\nu$ to $\mathcal{T}$ satisfies the following properties:

\begin{enumerate}
\item $\HH_\nu$ does not enjoy the SVEP.
\item  For every relatively open subset non-empty $U\subset \sigma(\HH_\nu),$ $\X_{\HH_\nu}(\overline{U})$ is dense in $X$.
\item $\sigma_p(\HH_\nu^*) = \emptyset.$
\item  $X_{\HH_\nu^*}(F) = \{0\}$ for every closed subset $F\subsetneq \sigma(\HH_\nu^*).$
\item  $\HH_\nu^*$ has the Dunford property (C).
\item  $\sigma_{\HH_\nu^*}(x) = \sigma(\HH_\nu^*)$ for every $x\in X^*\setminus \{0\}.$
\item  $r_{\HH_\nu^*}(x) = r(\HH_\nu^*)$ for every $x\in \X^*\setminus \{0\}.$
\item  If $M$ is a non-trivial closed invariant subspace for $\HH_\nu^*$, then
$ \sigma(\HH_\nu^*) \subseteq \sigma(\HH_\nu^*\mid_M)\subset \eta(\sigma(\HH_\nu^*)),$ where $\eta(\sigma(\HH_\nu^*))$ denotes the full spectrum of $\sigma(\HH_\nu^*)$, namely, the union of $\sigma(\HH_\nu^*)$ and all the bounded components of the resolvent set $\rho(\HH_\nu^*).$
\end{enumerate}
\end{theorem}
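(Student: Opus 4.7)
The plan is to convert the analytic family $f_\Delta$ into an analytic family of eigenvectors for $\HH_\nu$ and exploit it throughout. By the last clause of Theorem \ref{teorema spectral mapping}, for every $\lambda \in \inte(\sigma(-\Delta))$ one has $f_\Delta(\lambda) \in \ker(\HH_\nu - \EL(\nu)(\lambda)\, I)$, so $\lambda \mapsto f_\Delta(\lambda)$ is an analytic, non-vanishing eigenvector section of $\HH_\nu$ over the open set $\inte(\sigma(-\Delta))$, with eigenvalue $\EL(\nu)(\lambda)$ and dense total linear span in $X$.

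For (1), since $\EL(\nu)$ is non-constant analytic on the non-empty open set $\inte(\sigma(-\Delta))$, the inverse function theorem supplies an open disk $W \subseteq \C$ and a local analytic section $s$ of $\EL(\nu)$ defined on $W$; then $g(z) := f_\Delta(s(z))$ is a non-vanishing analytic map on $W$ with $(\HH_\nu - zI)g(z) = 0$, violating SVEP. For (2), given a non-empty relatively open $U \subseteq \sigma(\HH_\nu) = \EL(\nu)(\tilde{\sigma}(-\Delta))$, the identity $\overline{\inte(\sigma(-\Delta))} = \sigma(-\Delta)$ together with the continuity of $\EL(\nu)$ show that $V := \EL(\nu)^{-1}(U) \cap \inte(\sigma(-\Delta))$ is open and non-empty, and $f_\Delta(V) \subseteq \X_{\HH_\nu}(\overline{U})$. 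Density of this glocal subspace in $X$ then follows from the identity principle: any $\varphi \in X^*$ vanishing on $f_\Delta(V)$ makes $\lambda \mapsto \varphi(f_\Delta(\lambda))$ vanish on $V$, hence on all of $\inte(\sigma(-\Delta))$, and then $\varphi = 0$ by the density hypothesis. Property (3) proceeds analogously: an eigenvalue equation $\HH_\nu^* \varphi = z \varphi$ combined with the eigenvector relation yields $(\EL(\nu)(\lambda) - z)\,\varphi(f_\Delta(\lambda)) = 0$ for every $\lambda \in \inte(\sigma(-\Delta))$, so $\varphi(f_\Delta(\lambda))$ vanishes off the discrete zero set of $\EL(\nu) - z$, hence everywhere, forcing $\varphi = 0$. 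Absence of eigenvalues for $\HH_\nu^*$ then immediately gives SVEP for $\HH_\nu^*$.

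For (4), given a closed $F \subsetneq \sigma(\HH_\nu^*) = \sigma(\HH_\nu)$, pick $z_0 \in \sigma(\HH_\nu) \setminus F$ and a small open disk $B$ around $z_0$ with $\overline{B} \cap F = \emptyset$; set $U = B \cap \sigma(\HH_\nu)$. By (2), $\X_{\HH_\nu}(\overline{U})$ is dense in $X$; SVEP of $\HH_\nu^*$ makes its spectral and glocal subspaces coincide, and the duality inclusion \eqref{contenciones espectrales} gives $X_{\HH_\nu^*}(F) = \X^*_{\HH_\nu^*}(F) \subseteq \X_{\HH_\nu}(\overline{U})^\perp = \{0\}$. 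The remaining items reduce to bookkeeping: (5) follows by splitting into the cases $F \supseteq \sigma(\HH_\nu^*)$ (so $X_{\HH_\nu^*}(F) = X^*$) and $F \cap \sigma(\HH_\nu^*) \subsetneq \sigma(\HH_\nu^*)$ (so $X_{\HH_\nu^*}(F) = \{0\}$ by (4)); (6) is the contrapositive of (4) applied with $F = \sigma_{\HH_\nu^*}(\varphi)$; (7) combines (6) with the local spectral radius formula \eqref{local spectral radius formula}, valid by SVEP; and (8) follows from (6) together with the standard inclusions $\sigma(\HH_\nu^*|_M) \supseteq \sigma_{\HH_\nu^*}(\varphi)$ for any $\varphi \in M \setminus \{0\}$ and $\sigma(\HH_\nu^*|_M) \subseteq \eta(\sigma(\HH_\nu^*))$.

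The most delicate step is the analytic-continuation argument used in (2) and (3): the vanishing of $\lambda \mapsto \varphi(f_\Delta(\lambda))$ on the open subset $V$ has to be propagated to all of $\inte(\sigma(-\Delta))$. This is immediate when $\inte(\sigma(-\Delta))$ is connected; in the general regular-closed connected setting, one has to exploit the hypothesis $\overline{\inte(\sigma(-\Delta))} = \sigma(-\Delta)$ to bridge distinct components of the interior via their common boundary points, and this is where the main care is required.
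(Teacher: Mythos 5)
Your argument is essentially the paper's: both proofs turn $f_\Delta$ into an analytic eigenvector field for $\HH_\nu$ via the last clause of Theorem \ref{teorema spectral mapping}, obtain (1) from a local holomorphic inverse of $\EL(\nu)$ on a piece of $\inte(\sigma(-\Delta))$, obtain (2) from the density of $\Span\{f_\Delta(\lambda):\lambda\in G\}$ for suitable small open $G$ together with \cite[Proposition 3.3.1]{LN00}, and then derive (3)--(8) from (2) through the duality inclusion \eqref{contenciones espectrales} and standard local spectral theory. The only genuine divergence is in (3): you pair a putative eigenvector of $\HH_\nu^*$ directly against the eigenvector field and use that the zero set of $\EL(\nu)-z$ is discrete, whereas the paper runs (3) through the same glocal-duality mechanism it uses for (4); both routes are valid, and yours has the minor advantage of not needing (2) at that point. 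Concerning the ``delicate step'' you flag at the end: the paper treats the identity-principle propagation exactly as you do in the connected-interior case and says nothing about disconnected interiors, so you are not being less rigorous than the source; but be aware that your suggested repair (bridging components of $\inte(\sigma(-\Delta))$ through common boundary points) would not work, since vanishing of an analytic function on one component plus continuity at a shared boundary point gives no information on an adjacent component. In all the applications of the theorem in the paper $\inte(\sigma(-\Delta))$ is a half-plane or a strip, hence connected, so the issue is moot there; if you want the statement in full generality you should either add connectedness of the interior to the hypotheses or replace the single propagation step by the observation that it suffices to apply the density hypothesis componentwise, which is exactly what the hypothesis that the span over all of $\inte(\sigma(-\Delta))$ is dense does not by itself give you.
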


\begin{proof}
We begin by noting  that the existence of the analytic function $f_\Delta$ implies that $\inte(\sigma(-\Delta))\subseteq \sigma_p(-\Delta).$
In particular, by Theorem \ref{teorema spectral mapping}, it follows that
$$\sigma(\HH_\nu) = \EL(\nu)(\tilde{\sigma}(-\Delta)) \text{ and } \EL(\nu)(\sigma_p(-\Delta))\subseteq \sigma_p(\HH_\nu).$$

Let us start with the proof of $(1)$. Since $\EL(\nu)$ is holomorphic in the translated sector $-(\omega_0+\delta)+S_{\omega}$, which contains $\sigma(-\Delta),$ the set  $\EL(\nu)(\inte(\sigma(-\Delta)))$ is open and
\begin{equation} \label{spectrum containment}
\EL(\nu)(\inte(\sigma(-\Delta)))\subset \sigma(\HH_\nu).
\end{equation}
By hypothesis, the function $\EL(\nu)'$ is not zero,  so there exists an open subset $G\subseteq \inte(\sigma(-\Delta))\subseteq \sigma_p(-\Delta)$ such that
$$\EL(\nu) : G \rightarrow \EL(\nu)(G)$$
is biholomorphic. The analytic map $f_\Delta \circ \EL(\nu)^{-1} : \EL(\nu)(G) \rightarrow X$ satisfies that
$$(\HH_\nu-zI)f_\Delta\circ \EL(\nu)^{-1}(z) = 0$$
for every $z\in \EL(\nu)(G)$. Accordingly, $\HH_\nu$ does not enjoy the SVEP and $(1)$ follows.

\medskip

In order to prove $(2)$, we note that for every non-empty open set $\Omega \subset \inte(\sigma(-\Delta))$,
$$\Span \{f_{\Delta}(\lambda): \lambda \in \Omega\}$$
is dense in $X$  since the map $f_\Delta : \inte(\sigma(-\Delta)) \rightarrow \textnormal{Dom}(-\Delta)$ is holomorphic. Likewise, the hypotheses on the spectrum $\sigma(-\Delta)$ yield that it has no isolated points.
So, if $U\subset \sigma(\HH_\nu)$ is a relatively open non-empty set, having in mind that $\sigma(\mathcal{H}_\nu) = \mathcal{L}(\nu)(\tilde{\sigma}(-\Delta))$, $\textnormal{int}(\sigma(-\Delta))\subset \sigma_p(-\Delta)$ and $\mathcal{L}(\nu)(\sigma_p(-\Delta))\subseteq \sigma_p(\mathcal{H}_\nu)$, there exists an open subset $U_1 \subseteq U\cap \sigma_p(\mathcal{H}_\nu).$  Now, by Theorem \ref{teorema spectral mapping},  $U_1 \subseteq U\cap\sigma_p(\mathcal{H}_\nu) = U\cap\mathcal{L}(\nu)(\sigma_p(-\Delta))\setminus \mathcal{L}(\nu)(M)$, so there exists a non-empty open subset $U_2\subseteq \mathcal{L}(\nu)(\textnormal{int}(\sigma_p(-\Delta)))\cap U_1$, since $\mathcal{L}(\nu)(M)$ consists, at most, of two points.
Accordingly, there exists a non-empty open subset $G\subseteq \textnormal{int}(\sigma_p(-\Delta)) \subseteq \textnormal{int}(\sigma(-\Delta))$ such that $\mathcal{L}(\nu)(G)$ is contained in $U_2$.

Now,
$$\Span\{f_{\Delta}(\lambda) : \lambda \in G \}$$
is dense in $X$ and observe that
$$\HH_\nu f_{\Delta}(\lambda) = \EL(\nu)(\lambda) f_{\Delta}(\lambda),\qquad (\lambda \in G);$$
so $\EL(\nu)(\lambda)$ is an eigenvalue for $\HH_\nu$ for every $\lambda \in G$. But $\EL(\nu)(G)\subset U,$ so by \cite[Proposition 3.3.1]{LN00} and the inclusion properties of glocal spectral subspaces, it yields that $\Span\{f(\lambda): \lambda \in G\} \subseteq \X_{\HH_\nu}(\overline{U})$ and $(2)$ follows.

\medskip

In order to show $(3)$, recall that $\sigma(-\Delta)$ is not a singleton by hypothesis, so neither is $\sigma(\HH_\nu)$. Fix $\lambda_0 \in \sigma(\HH_\nu^*)=\sigma(\HH_\nu).$ By \cite[Proposition 3.3.1]{LN00}, we have that
$$\ker(\HH_\nu^*-\lambda I) \subset \X^*_{\HH_\nu^*}(\{\lambda_0\}).$$
Now, let $U\subset \sigma(\HH_\nu)$ be a non-empty relatively open set not containing $\lambda_0$ in its closure, which can be achieved since the spectrum $\sigma(\HH_\nu)$ is not a singleton. Then, having in mind \eqref{contenciones espectrales}, we deduce that
$$\X^*_{\HH_\nu^*}(\{\lambda_0 \}) \subseteq \X_{\HH_\nu}(\overline{U})^\perp = \{0\}.$$
Hence, $\lambda_0 \notin \sigma_p(\HH_\nu)$, which concludes the proof. Observe that, in particular, $\HH_\nu^*$ has the SVEP, and $X^*_{\HH_\nu^*}(F) = \X^*_{\HH_\nu^*}(F)$ for every closed set $F\subset \C.$

\medskip

Let us prove $(4)$. Let $F$ be a proper closed subset of $\sigma(\HH^*_\nu).$ Since $F$ is proper, there exists a relatively open subset $U\subset \sigma(\HH_\nu^*)$ such that $\overline{U}\cap F = \emptyset.$  Then, as in the proof of $(3)$, by \eqref{contenciones espectrales} and  the density of $\X_{\HH_\nu}(\overline{U})$, we deduce that $X^*_{\HH_\nu^*}(F) = \{0\}$, as we wished to prove.

\medskip

As a consequence of $(4)$, the proof of $(5)$ follows directly: if $F\subsetneq \sigma(\HH_\nu^*)$, then $X^*_{\HH_\nu^*}(F) = \{0\},$ and if $F=\sigma(T)$, it is trivial to check that $X^*_{\HH_\nu^*}(F)= X^*.$ In particular, $X^*_{\HH_\nu^*}(F)$ is a closed linear manifold for every closed set $F\subset \sigma(\HH_\nu^*)$, which yields the statement.

\medskip

In order to prove $(6)$ and $(7)$, we observe that statement $(4)$ directly implies that $(6)$ holds, and such identity together with \eqref{local spectral radius formula} yields $(7)$.

\medskip

To conclude the proof of Theorem \ref{main result}, we show $(8)$. Let $M\subset X^*$ be a non-trivial closed invariant subspace for $\HH_\nu^*$. Then, by \cite[Proposition 1.2.16 (e)]{LN00} it follows that $M\subset X_{\HH_\nu^*}(\sigma(\HH_\nu^*\mid_M))$. But $M\neq \{0\}$, so by $(4)$ we  deduce that $\sigma(\HH_\nu^*)\subseteq\sigma(\HH_\nu^*\mid_M).$ For the second containment, it is enough to apply \cite[Theorem 0.8]{RR}.
\end{proof}

\begin{remark}\label{remark semigrupo}
Under the hypothesis of Theorem \ref{main result}, if a weak spectral mapping formula of the form
	\begin{equation}\label{weak spectral mapping formula}
\sigma(T_t) = \overline{\{ e^{\lambda t}: \lambda \in \sigma(\Delta) \}} \qquad (t\geq 0)
	\end{equation}
holds, then the statements $(1)-(8)$ in Theorem \ref{main result} are also satisfied for each of the operators $T_t$ in the semigroup $\mathcal{T}$ besides the subordinated operator $\HH_\nu$ associated to it.

\medskip

It is worth mentioning that the equality \eqref{weak spectral mapping formula} does not hold for every $C_0$-semigroup acting on a complex Banach space $X$, see for instance \cite[Chapter IV, Section 3.3]{EN}. Nevertheless, \eqref{weak spectral mapping formula}, even in a stronger version, does hold for a large variety of $C_0$-semigroups, such as eventually compact semigroups, analytic semigroups, uniformly continuous semigroups or $C_0$-semigroups of normal operators on Hilbert spaces \cite[Chapter IV, Corollaries 3.12 and 3.14]{EN}.
\end{remark}

\section{Subordinated operators arising from semigroups of composition operators}\label{seccion 4}

Our main goal in this section is applying Theorem \ref{main result}  to a large collection of semigroups of composition operators acting on the Hardy spaces $H^p$ of the unit disc $\D$, $1<p<\infty$,  which will allow us  to get local spectral properties of the subordinated operators associated to such semigroups as well as of their adjoints. A particular instance is the classical Ces\`aro operator, mentioned in the introduction. The subordinated operators that emerge  turn out to be \textit{averaging operators} introduced by Siskakis in 1993 \cite{Siskakis3}.

\medskip

\subsection{Semigroups of analytic functions in $\D$} Recall that a  family $(\Fi_t)_{t\geq 0}$ of holomorphic functions on $\D$ is said to be a \emph{semigroup (or semiflow)} in $\D$ if the map $t \in [0,+\infty) \mapsto \hol(\D)$ is a continuous semigroup homomorphism, where $[0,+\infty)$ is endowed with the Euclidean topology and $\hol(\D)$ with the topology of uniform convergence on compact subsets of $\D.$ In particular, the following properties yield:
\begin{enumerate}
	\item $\Fi_0(z) = z$ for every $z \in \D.$
	\item $\Fi_t\circ \Fi_s = \Fi_{t+s}$ for every $s,t \geq 0.$
	\item $\lim_{t\to s} \Fi_t = \Fi_s$  uniformly on compact subsets of $\D$ for $s,t \geq 0.$
\end{enumerate}
In such a case, the function $t \mapsto \Fi_t(z)$ is differentiable for each $z \in \D$, and there exists a unique function $G: \D\rightarrow \C$ such that
$$ \frac{\partial \Fi_t(z)}{\partial_t} \mid_{t=t_0} = G(\Fi_{t_0}(z)), \qquad (z\in \D,\,  t_0 \geq 0).$$
This function $G$ is the \textit{infinitesimal generator} of the semigroup $(\Fi_t)_{t\geq 0}.$

\medskip

The semigroups in $\D$ can be classified with respect to the configuration of the fixed points of the functions of the semigroups. In this respect, a semigroup $(\Fi_t)_{t\geq 0}$ is \emph{elliptic} if there exists $z_0 \in \D$ such that $\Fi_{t_0}(z_0) = z_0$ for some $t_0>0.$ Otherwise, we will say that the semigroup is \textit{non}-elliptic.

In this latter case, there exists a unique point $\tau \in \T,$ called the \emph{Denjoy-Wolff point} such that $\Fi_t$ converges uniformly on compact subsets of $\D$ to the constant function $z\mapsto \tau$ when $t\rightarrow + \infty.$ Moreover, $\Fi_t(z)$ has non-tangential limits $e^{t\al}$ at the Denjoy--Wolff point, with some $\al \leq 0.$ If $\al < 0,$ the semigroup $(\Fi_t)_{t\geq 0}$ is said to be \emph{hyperbolic}, and it is said to be \emph{parabolic} when $\al = 0.$

\medskip

For non-elliptic semigroups, there exists a unique representation
\begin{equation}\label{representacion koenigs}
\Fi_t(z) = h^{-1}(h(z)+t), \qquad (z \in \D, \ t \geq 0),
\end{equation}
where $h : \D \mapsto \C$ is a univalent function with $h(0)=0$. $h$ is called the \emph{Koenigs function} of the semigroup $(\Fi_t)_{t\geq 0}$ and it is unique up to the normalization $h(0)=0$. The infinitesimal generator $G$ of the semigroup $(\Fi_t)_{t\geq 0}$ and the Koenigs function are related by means of
\begin{equation}\label{derivada h}
G(z)h'(z) = 1 \qquad (z\in \D).
\end{equation}

The range of the Koenigs function $\Omega = h(\D)$ is the associated \emph{Koenigs domain} of $(\Fi)_{t\geq 0}$, which is  \textit{convex in the positive direction}, that is, if $w\in \Omega$ then $w+t \in \Omega$ for every $t \geq 0.$ The geometry of Koenigs domain plays a significant role in understanding the semigroup. In particular,  a non-elliptic semigroup $(\Fi_t)_{t\geq 0}$ is hyperbolic if and only if its Koenigs domain $\Omega$ is contained in a horizontal strip (we refer to the recent monograph \cite{BCD} for this and further results and references).

\subsection{Semigroups of composition operators on $H^p$}
It is a well known result that every semigroup $(\Fi_t)_{t\geq 0}$ in $\D$ induces a $C_0$-semigroup  of composition  operators $(C_{\Fi_t})_{t\geq 0}$ acting on the Hardy space $H^p$ of the unit disc when $1\leq p < \infty.$ In such a case, the infinitesimal generator $\Delta$ of the semigroup $(C_{\Fi_t})_{t\geq 0}$ is given by
$$\Delta f = G f', \qquad (f \in \Dom(\Delta)),$$ where $G$ is the infinitesimal generator of $(\Fi_t)_{t\geq 0}$ and $\Dom(\Delta) = \{f\in H^p : Gf' \in H^p \}.$

Assume that $(\Fi_t)_{t\geq 0}$ is a non-elliptic semigroup. It is straightforward that $\lambda \in \sigma_p(\Delta)$ if and only if there exists $f\in \Dom(\Delta)$ such that $\lambda f = Gf'.$ This differential equation  has a unique solution up to a constant $e^{\lambda h} \in \hol(\D)$ (see \cite{Siskakis}), where $h$ is the Koenigs function of the semigroup $(\Fi_t)_{t\geq 0}.$ Thus, $\lambda \in \sigma_p(\Delta)$ if and only if $e^{\lambda h} \in H^p,$ and in such a case $\ker(\Delta -\lambda I) = \Span\{e^{\lambda h}\}.$

The point spectrum $\sigma_p(\Delta)$ was characterized by Betsakos in \cite{Betsakos} for hyperbolic semigroups. We present a slight reformulation of his results that will be easier to handle for our purposes:

\begin{theorem}\cite{Betsakos}\label{point spectrum generador}
Let $(\Fi_t)_{t\geq 0}$ be a hyperbolic semigroup of holomorphic functions on $\D$ and assume without loss of generality that its Denjoy-Wolff point is $\tau =1.$ Let $\Omega$ be the associated Koenigs domain of the semigroup and $\gamma$ be the width of the smallest horizontal strip containing $\Omega.$ Let $(C_{\Fi_t})_{t\geq 0}$ be the induced semigroup of composition operators on $H^p$ with $1\leq p < \infty$,  and $\Delta$ its infinitesimal generator.
\begin{enumerate}
	\item [(a)] Suppose that $\Omega$ does not contain any horizontal strip. Then
	$$ \{ \lambda \in \C : - \infty < \PR(\lambda) < \frac{\pi}{p\gamma}  \} \subseteq \sigma_p(\Delta; H^p)  \subseteq  \{ \lambda \in \C : - \infty < \PR(\lambda) \leq \frac{\pi}{p\gamma}  \}.$$
	\item [(b)] Suppose that $\Omega$ contains a horizontal strip. Then there exists a horizontal strip contained in $\Omega$ with width
	$$\beta_{\textnormal{max}} = \max \{\beta > 0: \Omega \ \textnormal{contains a horizontal strip of width } \beta\}.$$ Moreover,
	$$ \{ \lambda \in \C : - \frac{\pi}{p\beta_{\textnormal{max}}} < \PR(\lambda) < \frac{\pi}{p\gamma}  \} \subseteq \sigma_p(\Delta; H^p)  \subseteq  \{ \lambda \in \C :  - \frac{\pi}{p\beta_{\textnormal{max}}} < \PR(\lambda) \leq \frac{\pi}{p\gamma}  \}.$$
\end{enumerate}
\end{theorem}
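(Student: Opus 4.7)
The plan is to exploit the equivalence already recorded in the excerpt that $\lambda\in\sigma_p(\Delta;H^p)$ if and only if the eigenfunction $e^{\lambda h}$ lies in $H^p(\D)$, and then to translate this $H^p$-membership into a harmonic-measure condition on the Koenigs domain $\Omega$. Writing $\lambda=a+ib$, one has $|e^{\lambda h(z)}|^p=e^{p(a\,\PR h(z)-b\,\Impart h(z))}$, and since $\Omega$ is contained in a horizontal strip of width $\gamma$, the imaginary part $\Impart h$ is bounded on $\D$. Hence the second factor is bounded above and below by positive constants and the problem reduces to characterizing those $a\in\R$ for which $e^{a\,\PR h}\in L^p(\partial\D)$ in the boundary-value sense.

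Since $h:\D\to\Omega$ is a conformal bijection with $h(0)=0$, the pushforward of normalized arclength on $\partial\D$ by the boundary limits of $h$ equals the harmonic measure $\omega_\Omega(\cdot,0)$ on the prime-end boundary of $\Omega$. Consequently
\[
\|e^{\lambda h}\|_{H^p}^p \;\asymp\; \int_{\partial\Omega} e^{pa\,\PR w}\,d\omega_\Omega(w,0),
\]
and the question becomes one of convergence of this integral, governed by the distribution of harmonic measure at the two horizontal ``ends'' of $\Omega$. Because $\Omega$ is convex in the positive real direction, the right-hand end $\{\PR w\to+\infty\}$ is always present, whereas the left-hand end is present only if $\Omega$ extends in that direction as well.

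The decisive step is then a pair of sharp two-sided estimates for the harmonic-measure mass of $\{w\in\partial\Omega:\PR w>R\}$ and $\{w\in\partial\Omega:\PR w<-R\}$ in terms of the widths $\gamma$ and $\beta_{\mathrm{max}}$. For this I would apply Ahlfors' distortion theorem together with Beurling-type comparisons with explicit model strip domains. Since the smallest horizontal strip containing $\Omega$ has width $\gamma$, the mass of the right end decays like $e^{-\pi R/\gamma}$ with matching asymptotic upper and lower bounds, so the integral converges at $+\infty$ exactly when $pa<\pi/\gamma$, i.e.\ $\PR\lambda<\pi/(p\gamma)$. For the left end: in case~(a) the absence of any horizontal strip inside $\Omega$ yields, via the Ahlfors estimate, super-exponential decay of harmonic measure at $-\infty$, so no lower bound on $\PR\lambda$ is imposed; in case~(b) the maximal strip inside $\Omega$ has width $\beta_{\mathrm{max}}$, and comparison with that strip forces the harmonic-measure mass of $\{\PR w<-R\}$ to be comparable to $e^{-\pi R/\beta_{\mathrm{max}}}$, giving convergence iff $\PR\lambda>-\pi/(p\beta_{\mathrm{max}})$.

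The principal obstacle lies in producing these Ahlfors--Warschawski-type distortion estimates with matching upper and lower bounds and the sharp constants $\pi/\gamma$ and $\pi/\beta_{\mathrm{max}}$: an asymmetric estimate would only yield one of the two desired inclusions. The mild asymmetry in the outer inclusion at the boundary $\PR\lambda=\pi/(p\gamma)$ (closed on that side, strictly open in the inner inclusion) reflects the fact that the upper bound for harmonic measure at $+\infty$ is sharp while the matching lower bound is only asymptotic as $R\to\infty$, which is precisely the delicate point handled in~\cite{Betsakos}.
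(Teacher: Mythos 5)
The paper offers no proof of this statement: it is imported wholesale from Betsakos \cite{Betsakos} (the text only ``reformulates'' his result), so the only meaningful comparison is with the argument in that reference. Your sketch does track that argument: reduce to deciding when $e^{\lambda h}\in H^p$, transfer to the Koenigs domain, and control the two horizontal ends of $\Omega$ by Ahlfors--Warschawski-type harmonic-measure estimates with the constants $\pi/\gamma$ and $\pi/\beta_{\textnormal{max}}$.

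Two points deserve attention. First, the step $\norm{e^{\lambda h}}_{H^p}^p\asymp\int_{\partial\Omega}e^{pa\PR w}\,d\omega_\Omega(w,0)$ is not automatic: a holomorphic function whose boundary modulus is $p$-integrable need not lie in $H^p$ (the function $\exp\bigl((1+z)/(1-z)\bigr)$ has unimodular boundary values and lies in no $H^p$). The equivalence you want holds here only because $e^{\lambda h}$ belongs to the Smirnov class $N^+$; this in turn follows from the hyperbolicity you already used, since $\Impart h$ bounded makes $e^{ich}$ bounded above \emph{and} below on $\D$, hence outer, hence $h$ is a Herglotz integral of a bounded boundary function and $e^{\lambda h}$ is outer. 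Without this observation (or, as in \cite{Betsakos}, working instead with the conformally invariant criterion that $e^{p\PR(\lambda w)}$ admit a harmonic majorant in $\Omega$), the reduction to the boundary integral is a genuine gap. Second, the harmonic-measure estimates are stated too optimistically: at $+\infty$ the lower bound is only $e^{-\pi R/(\gamma-\varepsilon)}$ for each $\varepsilon>0$ (which still yields the stated open/closed inclusions), and at $-\infty$ the set of directions along which $\Omega$ recedes may be disconnected, so its total width can exceed $\beta_{\textnormal{max}}$; a crude Ahlfors bound on the whole left end then gives the wrong constant, and one must decompose the left end into channels, each comparable to a strip of width at most $\beta_{\textnormal{max}}$. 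These sharp two-sided bounds are exactly the content you defer to the citation, so your outline is a fair map of the proof but not a self-contained replacement for it.
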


With this result at hand, we state the following proposition regarding hyperbolic semigroups. Recall that a horizontal strip is a domain defined as $$\{z+it_0 \in \C: |\Impart(z)|< t_1\}\subset \C,$$ where $t_0 \in \R$ and $t_1 > 0$.

\begin{theorem}\label{proposicion espectro}
Let $(\Fi_t)_{t\geq 0}$ be a hyperbolic semigroup of holomorphic functions on $\D$ and assume without loss of generality that is Denjoy-Wolff point is $\tau = 1.$  Let $\Omega$ be the associated Koenigs domain of the semigroup and $\gamma$ be the width of the smallest horizontal strip containing $\Omega.$ Let $(C_{\Fi_t})_{t\geq 0}$ be the induced semigroup of composition operators on $H^p$, $1\leq p < \infty$, and $\Delta$ its infinitesimal generator.
\begin{enumerate}
	\item [(a)] Assume that $\Omega$ does not contain any horizontal strip. Then,
	$$\sigma(\Delta; H^p) = \{ \lambda \in \C : -\infty < \PR(\lambda) \leq  \frac{\pi}{p\gamma} \}.$$
	\item [(b)] Assume that $\Omega$ contains a horizontal strip, and let $\beta_{\textnormal{max}}$ be as in Theorem \ref{point spectrum generador}.
	\begin{enumerate}
		\item [(i)] If $\Omega$ is a horizontal strip, then $C_{\Fi_t}$ is invertible for every $t\geq 0$ and $\beta_{\textnormal{max}}= \gamma$ and
$$\sigma(\Delta; H^p) = \{ \lambda\in\C: |\PR(\lambda)| \leq \frac{\pi}{p\gamma} \}.$$
		\item [(ii)] If $\Omega$ is not a horizontal strip, then $C_{\Fi_t}$ is not invertible for any $t> 0$.
	\end{enumerate}
\end{enumerate}
\end{theorem}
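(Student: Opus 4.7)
The overall strategy is to combine Betsakos's point-spectrum result (Theorem \ref{point spectrum generador}) with the standard inclusion $\sigma(\Delta)\subseteq\{\lambda:\PR(\lambda)\leq\omega_0\}$ valid for the generator of any $C_0$-semigroup of type $\omega_0$, together with the Koenigs representation $\varphi_t(z)=h^{-1}(h(z)+t)$. The three cases are then distinguished by geometric properties of $\Omega$.

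For part (a), Theorem \ref{point spectrum generador}(a) gives $\{\PR(\lambda)<\pi/(p\gamma)\}\subseteq\sigma_p(\Delta;H^p)$, so closedness of $\sigma(\Delta;H^p)$ immediately yields the containment $\{\PR(\lambda)\leq\pi/(p\gamma)\}\subseteq\sigma(\Delta;H^p)$. For the reverse inclusion I would estimate the type $\omega_0$ via the Littlewood subordination inequality $\|C_{\varphi_t}\|_{H^p}\leq \bigl((1+|\varphi_t(0)|)/(1-|\varphi_t(0)|)\bigr)^{1/p}$ combined with the asymptotic $1-|\varphi_t(0)|\asymp e^{-t\pi/\gamma}$ at the Denjoy-Wolff point $\tau=1$, obtaining $\omega_0\leq\pi/(p\gamma)$ and therefore $\sigma(\Delta;H^p)\subseteq\{\PR(\lambda)\leq\pi/(p\gamma)\}$.

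For part (b)(i), when $\Omega$ is itself a horizontal strip of width $\gamma$, the equality $\beta_{\max}=\gamma$ is immediate, and $\Omega+t=\Omega$ for every real $t$ together with the Koenigs representation makes each $\varphi_t$ an automorphism of $\D$ with inverse $\varphi_{-t}$, so $C_{\varphi_t}$ is invertible and the semigroup extends to a group. Applying the type estimate from part (a) both to $(C_{\varphi_t})_{t\geq 0}$ and to the backward branch $(C_{\varphi_{-t}})_{t\geq 0}$, whose Koenigs domain is a translate of $-\Omega$ still of width $\gamma$, yields the two-sided bound $\sigma(\Delta)\subseteq\{|\PR(\lambda)|\leq\pi/(p\gamma)\}$; the matching lower containment follows by closing the open strip of eigenvalues produced by Theorem \ref{point spectrum generador}(b) with $\beta_{\max}=\gamma$.

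For part (b)(ii), I would argue by contradiction, using that $C_{\varphi_{t_0}}$ is invertible on $H^p$ if and only if $\varphi_{t_0}\in\mathrm{Aut}(\D)$. Such invertibility would give $\varphi_{t_0}(\D)=\D$, hence via the Koenigs representation $\Omega+t_0=\Omega$. Combined with convexity of $\Omega$ in the positive direction, this forces $\Omega+t=\Omega$ for every $t\in\R$ (for negative $t$, pick $n\in\N$ with $t+nt_0\geq 0$ and write $w+t=(w+t+nt_0)-nt_0$), so $\Omega$ would be a union of complete horizontal lines and therefore a horizontal strip, contradicting the hypothesis. The main technical point I expect to require care is the asymptotic $1-|\varphi_t(0)|\asymp e^{-t\pi/\gamma}$ used to pin down the type, which rests on standard but nontrivial hyperbolic-metric and angular-derivative arguments applied to the Koenigs function at the Denjoy-Wolff point.
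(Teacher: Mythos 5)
Your proposal is correct, and its skeleton coincides with the paper's: Betsakos's theorem plus closedness of the spectrum for the lower containment, a growth estimate of order $e^{\pi t/(p\gamma)}$ for the upper containment, and the fact that $C_{\Fi_{t_0}}$ is invertible on $H^p$ only if $\Fi_{t_0}$ is an automorphism for the dichotomy in (b). Each of the three steps, however, is executed by a different route. For the upper bound in (a) the paper does not estimate the type directly; it quotes Siskakis's spectral radius formula $r(C_{\Fi_t};H^p)=\Fi_t'(1)^{-1/p}$ together with $\Fi_t'(1)=e^{-\pi t/\gamma}$ and then applies the spectral inclusion $\{e^{\lambda t}:\lambda\in\sigma(\Delta)\}\subseteq\sigma(C_{\Fi_t})$ of Proposition \ref{spectral mapping resolvente}. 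Your route via Littlewood subordination reaches the same bound $\omega_0\leq\pi/(p\gamma)$, but be aware that the orbit asymptotic you flag is precisely the content of the divergence-rate theorem for hyperbolic semigroups (the hyperbolic speed $k_{\D}(0,\Fi_t(0))/t$ tends to $\pi/(2\gamma)$), so you are in effect re-proving Siskakis's formula; note also that only the one-sided estimate $1-|\Fi_t(0)|\geq e^{-(\pi/\gamma+\varepsilon)t}$ for large $t$ is needed, and that the known statement is $1-|\Fi_t(0)|=e^{-\pi t/\gamma+o(t)}$ rather than a genuine two-sided $\asymp$ with constants --- the $e^{o(t)}$ version suffices for computing the type. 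In (b)(i) the paper conjugates the semigroup explicitly to the standard group of hyperbolic automorphisms and invokes Nordgren's description of $\sigma(C_{\psi_t})$, whereas your two-sided type estimate applied to the forward and backward branches of the group (whose Koenigs domain $-\Omega$ is a strip of the same width) is a clean alternative that avoids Nordgren. In (b)(ii) the paper argues through rigidity results for semigroups containing a hyperbolic automorphism (\cite{BCD1}, \cite{CDMV}), while your argument --- invertibility of $C_{\Fi_{t_0}}$ forces $\Omega+t_0=\Omega$, hence $\Omega+t=\Omega$ for all real $t$ by positive-direction convexity, hence $\Omega$ is a union of full horizontal lines whose set of heights is an open interval, i.e.\ a strip --- is more elementary and self-contained. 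Both proofs are valid; yours trades three external citations for direct geometric arguments at the cost of the divergence-rate input in part (a).
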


\begin{proof}
We start proving $(a)$. If $\Omega$ does not contain any horizontal strip, by Theorem \ref{point spectrum generador}, it follows that
$$\{ \lambda \in \C : -\infty < \PR(\lambda) \leq  \frac{\pi}{p\gamma} \}\subseteq \sigma(\Delta; H^p).$$
For the reverse containment,  recall that the spectral radius of $C_{\Fi_t}$ in $H^p$ is given by $r(C_{\Fi_t}; H^p)= \Fi_t'(1)^{-1/p}$ (see \cite[Corollary 3.2]{Siskakis4}, for instance). Moreover,  $\Fi_t'(1) = e^{-\frac{\pi t}{\gamma}}$ (see  \cite[Theorem 2.1]{CD}, for instance), so
$$r(C_{\Fi_t}; H^p) = e^{\frac{\pi t}{\gamma p}}.$$
Upon applying Proposition \ref{spectral mapping resolvente}, it follows that for every $\lambda \in \sigma(\Delta)$ the inequality $\PR(\lambda) \leq \frac{\pi}{p\gamma}$ holds and $(a)$ is proved.
	
\smallskip

Now, we deal with $(b)$. Assume that $\Omega$ contains a horizontal strip and consider the following two cases:

\begin{enumerate}
\item [(i)] Assume $\Omega$ is a horizontal strip, and let
$$g(z) = \log\left(\frac{1+z}{1-z}\right), \qquad (z\in \D),$$
where we are considering the principal branch of the logarithm. Note that $g$ is a univalent function with $g(0)=0$ that maps the disc $\D$ onto the horizontal strip symmetric with respect to the real axis and with width $\pi.$ It is clear that there exists $\al_1,\al_2 > 0$ such that $\tilde{g}(z) := \al_1 g(z)+\al_2$ maps the unit disc onto $\Omega.$ Now, if $h$ is the Koenigs function of the semigroup, it is clear that the map
$$\Gamma : z\in \D \mapsto h^{-1}\circ \tilde{g}(z) \in \D$$
is a biholomorphic map of $\D$. Moreover, it is easy to check that $C_\Gamma C_{\Fi_t} = C_{\psi_t} C_\Gamma,$ where $\psi_t(z) = \tilde{g}^{-1}(\tilde{g}(z)+t).$ Then, the operators $C_{\Fi_t}$ and $C_{\psi_t}$ are similar for all $t\geq 0$, so it suffices to show the result for $C_{\psi_t}$. But, in this case,
$$\psi_t(z) = \frac{(e^{t/\al_1}+1)z+(e^{t/\al_1}-1)}{(e^{t/\al_1}-1)z+(e^{t/\al_1}+1)} \qquad (z\in \D),$$
which turns out to be a hyperbolic automorphism of $\D$ and, hence, $C_{\psi_t}$ is invertible.

Accordingly, $C_{\Fi_t}$ is invertible for every $t\geq 0$ and the assertion about the spectrum follows from \cite[Theorem 6]{Nordgren}, Theorem \ref{point spectrum generador} and Proposition \ref{spectral mapping resolvente}.

\smallskip

\item [(ii)] Assume $\Omega$ contains a horizontal strip but it is not one. We will argue by contradiction. Let us suppose that there exists $t_0 >0$ such that $C_{\Fi_{t_0}}$ is invertible. Then, $\Fi_{t_0}$ is an automorphism of the unit disc which is hyperbolic. Now, by \cite[Theorem 3.2]{BCD1}, every member of the semigroup is  a linear fractional map, and since they all commute, by \cite[Theorem 8]{CDMV}, it follows that every $\Fi_t$ with $t>0$ has to be a hyperbolic automorphism of the disc having the same fixed points. Finally, arguing as in the proof of $(i)$, it is easy to see that the Koenigs function of the semigroup $(\Fi_t)_{t\geq 0}$ (which can be extended to a group) maps the disc $\D$ onto a horizontal strip, which contradicts our assumption. Then, we have shown that $C_{\Fi_t}$ is not invertible for any $t > 0$, and the proof is done.
	\end{enumerate}
\end{proof}

\medskip

At this point, we can discuss the density of the eigenfunctions of infinitesimal generators of hyperbolic semigroups, which will be carried out by applying a result of \cite{BGY}:

\begin{proposition}\label{proposicion densidad}
Let $(\Fi_t)_{t\geq 0}$ be a hyperbolic semigroup of holomorphic functions in $\D$ and let $h$ and $\Omega$ denote its Koenigs function and Koenigs domain, respectively. Assume, without loss of generality, that $\tau =1.$ Let $(C_{\Fi_t})_{t\geq 0}$ be the induced semigroup of composition operators acting on $H^p$ ($1\leq p < \infty$) and $\Delta$ its infinitesimal generator. Assume that
\begin{enumerate}
	\item [(i)] Either $\Omega$ does not contain any horizontal strip, $\inte(\overline{\Omega})= \Omega$ and $\C\setminus\overline{\Omega}$ has at most two connected components,
	\item [(ii)] or $\Omega$ is a horizontal strip.
\end{enumerate}
Then, the linear manifold
$$ \Span \{e^{\lambda h} \in H^p: \lambda \in \inte(\sigma(\Delta; H^p) \}$$
is dense in $H^p$ for every $1\leq p < \infty.$
\end{proposition}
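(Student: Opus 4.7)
The plan is to combine the spectral description from Theorem \ref{proposicion espectro} with a completeness theorem for eigenfunction families proved in \cite{BGY}, after transporting the problem to the Koenigs domain $\Omega$ via the biholomorphism $h$.

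First I would identify $\inte(\sigma(\Delta;H^p))$ explicitly. Theorem \ref{proposicion espectro}(a) gives, in case (i), the open left half-plane $\{\PR(\lambda) < \pi/(p\gamma)\}$, while Theorem \ref{proposicion espectro}(b)(i) gives, in case (ii), the open vertical strip $\{|\PR(\lambda)| < \pi/(p\gamma)\}$. In both situations Theorem \ref{point spectrum generador} confirms that every such $\lambda$ lies in $\sigma_p(\Delta; H^p)$ and that $e^{\lambda h} \in H^p$ spans the corresponding eigenspace, so the candidate dense set is a genuine family of $H^p$-eigenfunctions.

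Next, by Hahn-Banach it suffices to show that any $g \in (H^p)^*$ with $\langle e^{\lambda h}, g\rangle = 0$ for all $\lambda$ in the relevant open region must vanish. The map $\lambda \mapsto \langle e^{\lambda h}, g\rangle$ is holomorphic in $\lambda$, and after pulling the pairing through the change of variable $w = h(z)$ it becomes a Laplace-type transform of a boundary datum attached to $g$ on $\partial \Omega$. In case (ii), this change of variable turns $\D$ into a strip (as in the proof of Theorem \ref{proposicion espectro}(b)(i)) and the problem reduces to a classical Paley-Wiener uniqueness statement on a vertical strip. In case (i), the hypotheses $\inte(\overline{\Omega}) = \Omega$, the absence of horizontal strips inside $\Omega$, and the at-most-two-components condition on $\C \setminus \overline{\Omega}$ match precisely the geometric setting in which \cite{BGY} establishes completeness of the exponential family $\{e^{\lambda w} : \PR(\lambda) < \pi/(p\gamma)\}$ in the appropriate space on $\Omega$; invoking that result forces the boundary datum, and therefore $g$, to be zero.

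The main obstacle will be a careful alignment of formalisms: one must identify the space on $\Omega$ to which $H^p(\D)$ is transferred by $f \mapsto f \circ h^{-1}$ (together with the appropriate Jacobian weight), verify that the completeness theorem from \cite{BGY} applies to that concrete space with the range of exponents dictated by Theorem \ref{proposicion espectro}, and ensure that the boundary data produced by the duality argument are of the type handled by the cited result. The rest of the argument—analytic continuation in $\lambda$, the Paley-Wiener reduction in case (ii), and the duality bookkeeping—is routine once this matching of formalisms is in place.
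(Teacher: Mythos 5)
Your overall strategy---transport the problem to the Koenigs domain via $h$ and invoke the completeness theorem of \cite{BGY}---is the same as the paper's, and your identification of $\inte(\sigma(\Delta;H^p))$ from Theorem \ref{proposicion espectro} is correct. However, there is a genuine gap in how you invoke \cite{BGY}. Theorem 1.1 of that paper is a \emph{weak-star density} statement in $H^\infty(\Omega)$ for the span of \emph{all bounded exponentials} $\{e^{\lambda z}\in H^\infty(\Omega):\lambda\in\C\}$; it is not a completeness statement for the $p$-dependent family $\{e^{\lambda w}:\PR(\lambda)<\pi/(p\gamma)\}$ in an $H^p$-type space on $\Omega$, which is how you use it. Two bridging steps are missing and are exactly where the paper does its work: (a) one must check that every bounded exponential on $\Omega$ occurs among the eigenfunctions indexed by $\inte(\sigma(\Delta;H^p))$ (this uses the convexity of $\Omega$ in the positive direction together with Theorems \ref{point spectrum generador} and \ref{proposicion espectro}); and (b) one must upgrade weak-star density in $H^\infty(\Omega)$ to norm density in $H^p(\Omega)$ --- for $1<p<\infty$ the paper passes to weak density and applies Mazur's lemma, and the case $p=1$ is handled separately via the continuous dense inclusion $H^p(\Omega)\hookrightarrow H^1(\Omega)$. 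Your Hahn--Banach framing could be made to work for $1<p<\infty$ only after showing that the annihilating functional restricts to a weak-star continuous functional on $H^\infty(\Omega)$, which is precisely step (b) in disguise, and it does not cover $p=1$. Your reduction of case (ii) to ``classical Paley--Wiener on a strip'' is also unsubstantiated; the paper treats both geometric cases uniformly through \cite{BGY}.

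A smaller point: the ``appropriate Jacobian weight'' you worry about is a red herring. The paper uses the harmonic-majorant definition of $H^p(\Omega)$ (Duren, Ch.~10), under which $f\mapsto f\circ h$ is a surjective isometry with no weight at all; this is what makes the transfer to $\Omega$ clean and is worth stating explicitly rather than leaving as an obstacle to be resolved.
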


The proof is based on the ideas developed in \cite{BGY}, and we include it for the sake of completeness.

\begin{proof}
For $1\leq p<\infty$, let $H^p(\Omega)$ be the Hardy space associated to the Koenigs domain $\Omega$, that is, the Banach space consisting of holomorphic functions $f:\Omega \rightarrow \C$ such that there exists a harmonic function $u:\Omega \rightarrow \R$ such that $|f(z)|^p \leq u(z)$ for every $z \in \Omega$ (see Duren's book \cite[Chapter 10]{Duren} for more on these spaces). It turns out that the map $f \in H^p(\Omega) \mapsto f\circ h \in H^p$ is a surjective isometry, so it suffices to show that the linear manifold $\Span \{e^{\lambda z} : \lambda \in \inte(\sigma(\Delta; H^p)) \}$ is dense in $H^p(\Omega).$
	
Let $H^\infty(\Omega)$ denote the space of bounded holomorphic functions in $\Omega.$ By Theorems \ref{point spectrum generador} and \ref{proposicion espectro} it follows that $\inte(\sigma(\Delta;H^p))\subseteq \sigma_p(\Delta;H^p)$, so for every $\lambda \in \inte(\sigma(\Delta\mid_{H^p}))$ the kernel $\ker(\Delta - \lambda I)$  has dimension one and it is spanned by $e^{\lambda h}$. Observe that, by Theorem \ref{proposicion espectro} and the convexity in the positive direction of $\Omega,$  $$\{e^{\lambda z}\in H^\infty(\Omega): \lambda \in \C \}\subset \{e^{\lambda h} \in H^p: \lambda \in \inte(\sigma(\Delta; H^p)) \}$$ for all $1\leq p < \infty.$ Now,  \cite[Theorem 1.1]{BGY} yields that  the linear manifold
$$ \Span\{e^{\lambda z}\in H^\infty(\Omega): \lambda \in \C \}$$
is weak-star dense in $H^\infty(\Omega).$ As a consequence, if $1<p<\infty,$ such linear manifold is weakly dense in $H^p(\Omega),$ and by Mazur's Lemma (\cite[Corollary 3, Chapter 2]{Diestel}), it is dense on $H^p(\Omega).$ Finally, the continuity of the inclusion $H^p(\Omega) \hookrightarrow H^1(\Omega)$ and the density of $H^p(\Omega)$ in $H^1(\Omega)$ yields the density for $p=1,$ which completes the proof.
\end{proof}

The next theorem is the counterpart of Theorem \ref{main result} in the setting of hyperbolic semigroups of composition operators:

\begin{theorem}\label{teorema semigrupos}
Let $(\Fi_t)_{t\geq 0}$ be a hyperbolic semigroup of holomorphic functions in $\D$ and $\Omega$  its Koenigs domain. Assume, without loss of generality, that $\tau =1.$ Let $(C_{\Fi_t})_{t\geq 0}$ be the induced semigroup of composition operators in $X=H^p$, $1\leq p < \infty$, and $\Delta$ its infinitesimal generator of type $\omega_0\in \mathbb{R}$. Assume that
\begin{enumerate}
	\item [(i)] Either $\Omega$ does not contain any horizontal strip, $\inte(\overline{\Omega})= \Omega$ and $\C\setminus\overline{\Omega}$ has at most two connected components,
	\item [(ii)] or $\Omega$ is a horizontal strip.
\end{enumerate}
Let $\nu$ be a finite complex Borel measure supported in $[0,\infty)$ such that $\int_0^\infty e^{(w_0+\delta)t} d\nu(t) < \infty$ for some $\delta >0$ and $\EL(\nu)$ is a non-constant function in $\mathcal{E}(-\Delta)$. If $\HH_\nu$ is the subordinated operator associated to $(C_{\Fi_t})_{t\geq 0}$  the following holds:
\begin{enumerate}
	\item $\HH_\nu$ does not have the SVEP.
	\item  For every relatively open subset $U\subset \sigma(\HH_\nu),$ $\X_{\HH_\nu}(\overline{U})$ is dense in $X$.
	\item $\sigma_p(\HH_\nu^*) = \emptyset.$
	\item  $X_{\HH_\nu^*}(F) = \{0\}$ for every closed subset $F\subsetneq \sigma(\HH_\nu^*).$
	\item  $\HH_\nu^*$ has the Dunford property (C).
	\item  $\sigma_{\HH_\nu^*}(f) = \sigma(\HH_\nu^*)$ for every $f\in X^*\setminus \{0\}.$
	\item  $r_{\HH_\nu^*}(f) = r(\HH_\nu^*)$ for every $f\in X^*\setminus \{0\}.$
	\item  If $M$ is a non-trivial closed invariant subspace for $\HH_\nu^*$, then
	$ \sigma(\HH_\nu^*) \subseteq \sigma(\HH_\nu^*\mid_M)\subset \eta(\sigma(\HH_\nu^*)).$
\end{enumerate}
\end{theorem}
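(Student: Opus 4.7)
The strategy is to verify, under hypotheses (i) or (ii), the structural assumptions of Theorem \ref{main result} for the pair $(-\Delta, \nu)$, and then invoke that theorem verbatim. The measure-theoretic hypothesis on $\nu$ is already present in the statement, so the work reduces to checking the three conditions on $-\Delta$: that $\sigma(-\Delta)$ is connected and equals the closure of its interior, that there is a non-vanishing holomorphic family of eigenvectors on $\inte(\sigma(-\Delta))$, and that these eigenvectors have dense linear span in $H^p$.

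The first condition follows from Theorem \ref{proposicion espectro}. In case (i) the spectrum $\sigma(\Delta; H^p)$ is the closed left half-plane $\{\lambda \in \C : \PR(\lambda) \leq \pi/(p\gamma)\}$, while in case (ii) it is the closed vertical strip $\{\lambda \in \C : |\PR(\lambda)| \leq \pi/(p\gamma)\}$. Negating, $\sigma(-\Delta; H^p)$ is a closed right half-plane or a closed vertical strip, both of which are connected and equal to the closure of their interiors, which are themselves non-empty. For the second condition, I invoke the description of the point spectrum given just before Theorem \ref{point spectrum generador}: since the semigroup is non-elliptic, $\lambda \in \sigma_p(\Delta; H^p)$ if and only if $e^{\lambda h} \in H^p$, and then $\ker(\Delta - \lambda I) = \Span\{e^{\lambda h}\}$. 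Combined with Theorems \ref{point spectrum generador} and \ref{proposicion espectro}, this yields $\inte(\sigma(\Delta; H^p)) \subseteq \sigma_p(\Delta; H^p)$. I will therefore set
$$
f_{\Delta}(\mu) := e^{-\mu h}, \qquad \mu \in \inte(\sigma(-\Delta; H^p)),
$$
which is non-vanishing, lands in $\Dom(-\Delta)$, and satisfies $(-\Delta - \mu I)f_{\Delta}(\mu) = 0$. Its holomorphy as an $H^p$-valued map on the interior of $\sigma(-\Delta; H^p)$ follows from the standard criterion that weakly holomorphic maps into a Banach space are norm holomorphic, together with local boundedness of $\{e^{-\mu h}\}$ in $H^p$-norm on compact subsets of the interior (which is afforded by the explicit description of the point spectrum via the width $\gamma$ and, in case (b), $\beta_{\max}$).

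The third condition is precisely the content of Proposition \ref{proposicion densidad}: the span $\Span\{e^{\lambda h} : \lambda \in \inte(\sigma(\Delta; H^p))\}$ is dense in $H^p$, and this coincides with $\Span\{f_{\Delta}(\mu) : \mu \in \inte(\sigma(-\Delta; H^p))\}$ after the change of variable $\mu = -\lambda$. With all hypotheses of Theorem \ref{main result} satisfied, conclusions (1)--(8) transfer directly. The main obstacle is the density statement, but that has already been dispatched by Proposition \ref{proposicion densidad} via the geometric assumption on $\Omega$ and the weak-star density theorem of \cite{BGY}; the remaining verifications are organisational. One should also briefly confirm that $\EL(\nu)$ satisfies the assumptions needed in Theorem \ref{main result}, which is immediate from the hypothesis that $\EL(\nu)$ is a non-constant element of $\mathcal{E}(-\Delta)$.
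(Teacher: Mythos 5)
Your proposal is correct and follows essentially the same route as the paper: both verify the hypotheses of Theorem \ref{main result} by using Theorem \ref{proposicion espectro} to identify $\sigma(-\Delta)$ as a closed right half-plane or vertical strip (hence connected and equal to the closure of its interior), take $f_\Delta(\mu)=e^{-\mu h}$ as the non-vanishing analytic eigenvector family via the characterization $\inte(\sigma(\Delta))\subseteq\sigma_p(\Delta)$, and invoke Proposition \ref{proposicion densidad} for the density of the span. Your additional remark justifying norm-holomorphy of $\mu\mapsto e^{-\mu h}$ via weak holomorphy plus local boundedness is a detail the paper leaves implicit, but it does not change the argument.
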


The proof follows as an application of Theorem \ref{main result}. Note that by Theorem \ref{proposicion espectro}, $\overline{\inte(\sigma(-\Delta))}= \sigma(-\Delta)$, so the map $$f_\Delta : z \in \inte(\sigma(-\Delta)) \mapsto e^{-zh} \in \Dom(-\Delta)$$
is well defined and for every $-z \in \inte(\sigma(\Delta))$
$$(-\Delta-zI)f_\Delta(z) = -(\Delta- (-z)I)e^{-z h} = 0,$$
since  $\inte(\sigma(\Delta))\subset \sigma_p(\Delta)$ and $\inte(\sigma(-\Delta)) = - \inte(\sigma(\Delta))$ holds. Finally, by Proposition \ref{proposicion densidad},
$$\{ f_\Delta(\lambda): \lambda \in \inte(\sigma(-\Delta)) \}$$ is dense in $H^p$, so the conditions of Theorem \ref{main result} are therefore fulfilled.

\smallskip

\smallskip

In particular, as it was discussed in Remark \ref{remark semigrupo}, it is possible to derive the following consequence for each operator in the hyperbolic composition semigroup $(C_{\Fi_t})_{t\geq 0}$:

\smallskip

\begin{corollary}\label{corolario semigrupos composicion}
Let $(\Fi_t)_{t\geq 0}$ be a hyperbolic semigroup of holomorphic functions in $\D$ and $\Omega$  its Koenigs domain. Assume, without loss of generality, that $\tau =1.$ Let $(C_{\Fi_t})_{t\geq 0}$ be the induced semigroup of composition operators in $X=H^p$, $1\leq p < \infty$, and $\Delta$ its infinitesimal generator. Assume that
\begin{enumerate}
	\item [(i)] Either $\Omega$ does not contain any horizontal strip, $\inte(\overline{\Omega})= \Omega$ and $\C\setminus\overline{\Omega}$ has at most two connected components;
	\item [(ii)] or $\Omega$ is a horizontal strip.
\end{enumerate}
For every $t>0$  the following holds:
\begin{enumerate}
	\item $C_{\Fi_t}$ does not have the SVEP.
	\item  For every relatively open subset $U\subset \sigma(C_{\Fi_t}),$ $\X_{C_{\Fi_t}}(\overline{U})$ is dense in $X$.
	\item $\sigma_p(C_{\Fi_t}^*) = \emptyset.$
	\item  $X_{C_{\Fi_t}^*}(F) = \{0\}$ for every closed subset $F\subsetneq \sigma(C_{\Fi_t}^*).$
	\item  $C_{\Fi_t}^*$ has the Dunford property (C).
	\item  $\sigma_{C_{\Fi_t}^*}(f) = \sigma(C_{\Fi_t}^*)$ for every $f\in X^*\setminus \{0\}.$
	\item  $r_{C_{\Fi_t}^*}(f) = r(C_{\Fi_t}^*)$ for every $f\in X^*\setminus \{0\}.$
	\item  If $M$ is a non-trivial closed invariant subspace for $T^*$, then
	$ \sigma(C_{\Fi_t}^*) \subseteq \sigma(C_{\Fi_t}^*\mid_M)\subset \eta(\sigma(C_{\Fi_t}^*)).$
\end{enumerate}
\end{corollary}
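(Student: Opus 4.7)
The plan is to follow the template of the proof of Theorem \ref{teorema semigrupos} and to implement the transfer mechanism outlined in Remark \ref{remark semigrupo}. The hypotheses of Theorem \ref{main result} on $\Delta$ have already been checked in the paragraph following the statement of Theorem \ref{teorema semigrupos}: the analytic map $f_\Delta(z)=e^{-zh}$ on $\inte(\sigma(-\Delta))$ produces non-vanishing elements of $\ker(-\Delta-zI)$, and Proposition \ref{proposicion densidad} guarantees that their linear span is dense in $H^p$. The sole extra ingredient required is the weak spectral mapping formula
\begin{equation*}
\sigma(C_{\Fi_t}) \;=\; \overline{\{e^{\lambda t}:\lambda\in\sigma(\Delta)\}},\qquad t>0,
\end{equation*}
which plays, for the individual operators $C_{\Fi_t}$, the role that the exact identity $\sigma(\HH_\nu)=\EL(\nu)(\tilde\sigma(-\Delta))$ plays in the proof of Theorem \ref{main result}.

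First I would establish this formula. The inclusion $\supseteq$ is immediate from Proposition \ref{spectral mapping resolvente}(1) together with the closedness of $\sigma(C_{\Fi_t})$. For the reverse inclusion I would appeal to Theorem \ref{proposicion espectro}. In case (i) of the corollary, $\sigma(\Delta)$ is the closed half-plane $\{\PR(\lambda)\leq \pi/(p\gamma)\}$, while the spectral radius of $C_{\Fi_t}$ equals $\Fi_t'(1)^{-1/p}=e^{\pi t/(p\gamma)}$ (as recorded in the proof of Theorem \ref{proposicion espectro}(a)); hence $\sigma(C_{\Fi_t})\subseteq \overline{\D}_{e^{\pi t/(p\gamma)}}$, which already coincides with the closure of the exponential image. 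In case (ii), $\Omega$ is a horizontal strip and $C_{\Fi_t}$ is invertible, so applying the spectral radius bound to both $C_{\Fi_t}$ and $C_{\Fi_t}^{-1}$ traps $\sigma(C_{\Fi_t})$ inside the closed annulus $\{e^{-\pi t/(p\gamma)}\leq |z|\leq e^{\pi t/(p\gamma)}\}$, which matches the exponential image of the vertical strip $\sigma(\Delta)$ and is already closed.

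With the weak spectral mapping formula in hand, I would repeat verbatim the eight-step argument of Theorem \ref{main result}, replacing $\EL(\nu)$ by the non-constant entire function $z\mapsto e^{-zt}$. For (1), choose an open set $G\subseteq \inte(\sigma(-\Delta))$ on which this map is biholomorphic onto its image; then $f_\Delta$ composed with the local inverse yields an analytic $H^p$-valued function annihilated by $C_{\Fi_t}-\zeta I$ on an open subset of $\C$, contradicting SVEP. For (2), since $z\mapsto e^{-zt}$ is an open map, every relatively open $U\subseteq\sigma(C_{\Fi_t})$ contains a subset of the form $\{e^{\lambda t}:\lambda\in G'\}$ with $G'\subseteq\inte(\sigma(\Delta))$ open; combining the identity $C_{\Fi_t}e^{\lambda h}=e^{\lambda t}e^{\lambda h}$ (from Proposition \ref{spectral mapping resolvente}) with the density of $\Span\{f_\Delta(\lambda):\lambda\in G'\}$ supplied by Proposition \ref{proposicion densidad} forces $\X_{C_{\Fi_t}}(\overline{U})$ to be dense in $H^p$. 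The remaining items (3)--(8) then follow from (2) by exactly the same chain of reasoning used in the closing paragraphs of the proof of Theorem \ref{main result}: the duality \eqref{contenciones espectrales}, the local spectral radius formula \eqref{local spectral radius formula}, and the results \cite[Proposition 1.2.16(e)]{LN00} and \cite[Theorem 0.8]{RR} on restrictions to invariant subspaces.

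The principal technical hurdle is the verification of the weak spectral mapping formula. In Theorem \ref{main result} this was encoded in the sectorial functional calculus through Theorem \ref{teorema spectral mapping}, but here the formula must be assembled by hand, combining Proposition \ref{spectral mapping resolvente}, the explicit shape of $\sigma(\Delta)$ provided by Theorem \ref{proposicion espectro}, and the spectral radius of $C_{\Fi_t}$ (and, in case (ii), of its inverse). Once this is in place the remainder of the argument is a faithful transcription of the proof of Theorem \ref{main result}.
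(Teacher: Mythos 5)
Your proposal is correct and follows essentially the same route as the paper: establish the weak spectral mapping formula $\sigma(C_{\Fi_t})=\overline{\{e^{\lambda t}:\lambda\in\sigma(\Delta)\}}$ from Proposition \ref{spectral mapping resolvente}, Theorem \ref{proposicion espectro} and the spectral radius $e^{\pi t/(p\gamma)}$, and then transfer properties $(1)$--$(8)$ exactly as indicated in Remark \ref{remark semigrupo}. The only (immaterial) divergence is in case (ii), where you trap the spectrum in the closed annulus by applying the spectral radius bound to $C_{\Fi_t}$ and its inverse, whereas the paper cites Nordgren's theorem on composition operators induced by hyperbolic automorphisms.
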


\begin{proof}
Assume that $\Omega$ does not contain any horizontal strip, the proof of Theorem \ref{proposicion espectro} yields that
\begin{equation}\label{spectro Fit}\sigma(C_{\Fi_t}; H^p) = \overline{\{e^{\lambda t}: \lambda \in \sigma(\Delta) \}}\end{equation} for every $t> 0$ having in mind Proposition \ref{spectral mapping resolvente} and $r(C_{\Fi_t}; H^p)= e^{\frac{\pi t}{\gamma p}}$.

On the other hand, if $\Omega$ is a horizontal strip, the semigroup is similar to a semigroup of hyperbolic automorphisms and in such a case Theorem \ref{point spectrum generador} along with \cite[Theorem 6]{Nordgren} yields \eqref{spectro Fit}.
\end{proof}

\medskip

\begin{remark}
It is important to note that properties $(1)-(8)$ were already proved for composition operators induced by hyperbolic linear fractional maps in \cite{GGM}. Corollary \ref{corolario semigrupos composicion} extends these results to every composition operator that can be embedded in a hyperbolic semigroup such that its Koenigs domain meets the geometrical conditions $(i)$ or $(ii)$.
\end{remark}

\subsection{Examples of subordinated operators}\label{subseccion} In this subsection, we provide particular instances of hyperbolic semigroups of holomorphic functions in $\D$ which yield examples of subordinated operators for which Theorem \ref{teorema semigrupos} applies. Of particular interest is the classical Ces\`aro operator.

\begin{enumerate}
\item For each $t\geq 0$, let
$$
\Fi_t(z) = e^{-t}z + (1-e^{-t}), \qquad (z\in \D).
$$
Each $\Fi_t$ is an affine map, in particular a hyperbolic non-automorphism linear fractional map in $\D$ which fixes $1$ and $\infty$.
Clearly $(\Fi_t)_{t\geq 0}$ is a semigroup of holomorphic functions in $\D$ and  its Koenigs function is given by $h(z) = -\log(1-z)$, for $z\in \D$,  where the principal branch of the logarithm is considered. The Koenigs domain $\Omega$ is contained in
$$\{ z\in \C: \PR(z) \geq -\log(2),\; -\pi/2 < \Impart(z)<\pi/2 \}$$
and the semigroup is hyperbolic since it is contained in a horizontal strip. Indeed,  $\Omega$ does not contain any horizontal strip and $\inte(\overline{\Omega})= \Omega$, so Theorem \ref{teorema semigrupos} clearly applies to the semigroup $(C_{\Fi_t})_{t\geq 0}$ acting on $H^p.$

\medskip

It is well known that $\sigma(\Delta; H^p) = \{\lambda \in \C: \PR(\lambda) \leq \frac{1}{p} \}.$ For each $\lambda \in \C$ such that $\PR(\lambda) > \frac{1}{p},$  let $\nu_\lambda$ the Borel measure
$$d\nu_\lambda(t) = e^{-\lambda t}dt$$
in  $[0,\infty).$ Clearly $\nu_{\lambda}$ satisfies the hypothesis on the measure of Theorem \ref{teorema semigrupos} and by \eqref{laplace transform} $\HH_{\nu_\lambda} = R(\lambda,\Delta).$ For every $f \in H^p$ we have
\begin{equation*}
\begin{split}
\HH_{\nu_\lambda} f(z) &= \int_{0}^\infty e^{-\lambda t} (C_{\Fi_t}f)(z) dt = \int_{0}^\infty e^{-\lambda t} f(e^{-t}z+(1-e^{-t})) dt \\
& = \frac{-1}{(z-1)^\lambda}\int_{z}^1 (s-1)^{\lambda-1}f(s)ds, \qquad (z\in \D)
\end{split}
\end{equation*}
so $\HH_{\nu_\lambda}$ is an averaging operator.  Theorem \ref{teorema semigrupos} assures that properties $(1)-(8)$ are satisfied for such operators.

\medskip

In particular, consider $1<p<\infty$ and let $\lambda =1.$ In this case, we obtain the operator
$$\HH_{\nu_1}f(z) = \frac{-1}{z-1} \int_{z}^{1} f(s)ds, \qquad (z\in \D).$$
By \cite{MMS} $-\HH_{\nu_1}$ is (similar to) the adjoint operator of the Cesàro operator $\mathcal{C}$ acting on $H^p,$ that is, the operator defined as
$$(\mathcal{C}f)(z) = \int_0^z \frac{f(s)}{1-s}ds, \qquad (z\in \D).$$
Therefore, $\mathcal{C}^*$ does not enjoy the SVEP and their glocal spectral subspaces associated to the closure of relatively open sets are dense. Moreover, the spectral subspaces associated to proper closed sets of the Cesàro operator are zero, so the Cesàro operator has the Dunford property, the local spectrum at non-zero functions coincides with the spectrum of $\mathcal{C}$, and $\sigma(\mathcal{C}\mid_M) = \sigma(\mathcal{C})$ for every non-trivial closed invariant subspace of $\mathcal{C}$.

\medskip

It is worth pointing out that the Dunford property was already known for the Cesàro operator since Miller, Miller and Smith \cite{MMS} proved that $\mathcal{C}$ has the Bishop property which implies it. Nevertheless, the characterization of the spectral subspaces and the local spectra of $\mathcal{C}$ had not been previously studied in the literature.  Next result (already mentioned in the Introduction) states the new features regarding the local spectra of $\mathcal{C}$, and  as a particular instance, yields that $\mathcal{C}$ enjoys the Dunford property $(C)$.

\medskip

\begin{theorem}
Let $1<p< \infty$ and $\mathcal{C}$ be  the Cesàro operator acting on the Hardy space $H^p.$ Then:
	\begin{enumerate}
	\item  $\sigma_{\mathcal{C}}(f) = \sigma(\mathcal{C}; H^p)$ for every $f\in H^p\setminus \{0\}.$
	\item  $r_{\mathcal{C}}(f) = r(\mathcal{C}; H^p)$ for every $f\in H^p\setminus \{0\}.$
    \item $H^p_{\mathcal{C}}(F) = \{0\}$ for every closed subset $F\subsetneq \sigma(\mathcal{C}; H^p).$
    \item  If $M$ is a non-trivial closed invariant subspace of $\mathcal{C}$, then $\sigma(\mathcal{C}\mid_M; H^p)= \sigma(\mathcal{C}; H^p).$
	\end{enumerate}
\end{theorem}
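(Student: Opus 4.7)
The plan is to deduce the whole theorem from Theorem \ref{teorema semigrupos} applied to the subordinated operator $\HH_{\nu_1}$ introduced in Subsection \ref{subseccion}. There it is already verified that the hyperbolic affine semigroup $\Fi_t(z)=e^{-t}z+(1-e^{-t})$ has Koenigs domain $\Omega=-\log(1-\D)$ satisfying hypothesis $(i)$ of Theorem \ref{teorema semigrupos} (it is a Jordan domain contained in the strip $\{|\Impart w|<\pi/2\}$, contains no horizontal strip, and has $\inte(\overline{\Omega})=\Omega$ with connected complement), and that $d\nu_1(t)=e^{-t}dt$ meets the integrability and $\mathcal{E}(-\Delta)$ requirements. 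Hence the conclusions $(1)$--$(8)$ of Theorem \ref{teorema semigrupos} hold for $\HH_{\nu_1}^{\ast}$.

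Next, also from Subsection \ref{subseccion}, $\mathcal{C}^{\ast}$ is (similar to) $-\HH_{\nu_1}$; taking adjoints and using reflexivity of $H^p$ for $1<p<\infty$, one obtains that $\mathcal{C}$ is similar to $-\HH_{\nu_1}^{\ast}$ via some invertible operator $T$. Since local spectra, local spectral radii and (glocal) spectral subspaces are similarity covariants, and since under sign reversal one has $\sigma_{-A}(x)=-\sigma_{A}(x)$, $r_{-A}(x)=r_{A}(x)$, $X_{-A}(F)=X_{A}(-F)$ and $\eta(-K)=-\eta(K)$, the statements $(1)$, $(2)$ and $(3)$ follow directly from Theorem \ref{teorema semigrupos} $(6)$, $(7)$ and $(4)$, respectively, after applying $T$ and reversing sign. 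The only thing to check is that $T$ sends nonzero vectors to nonzero vectors (automatic) and that a closed proper subset $F\subsetneq\sigma(\mathcal{C};H^p)$ corresponds to a closed proper subset $-F\subsetneq\sigma(\HH_{\nu_1}^{\ast})$.

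For statement $(4)$, similarity and sign reversal together convert the inclusions $\sigma(\HH_{\nu_1}^{\ast})\subseteq\sigma(\HH_{\nu_1}^{\ast}\mid_N)\subseteq\eta(\sigma(\HH_{\nu_1}^{\ast}))$ of Theorem \ref{teorema semigrupos} $(8)$, applied to $N=T^{-1}M$, into $\sigma(\mathcal{C})\subseteq\sigma(\mathcal{C}\mid_M)\subseteq\eta(\sigma(\mathcal{C}))$. Now $\sigma(\mathcal{C};H^p)=\{z\in\C:|z-2/p|\leq 2/p\}$ is a closed disc whose complement in $\C$ is connected and has no bounded components, so $\eta(\sigma(\mathcal{C};H^p))=\sigma(\mathcal{C};H^p)$ and equality follows. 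The only genuine bookkeeping in the argument is the transport of local spectral data through the similarity combined with the sign change; once that is recorded, the theorem is an immediate corollary of Theorem \ref{teorema semigrupos}.
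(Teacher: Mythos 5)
Your proposal is correct and follows essentially the same route as the paper: the theorem is obtained as a corollary of Theorem \ref{teorema semigrupos} applied to the affine hyperbolic semigroup with Koenigs function $h(z)=-\log(1-z)$ and the measure $d\nu_1(t)=e^{-t}\,dt$, using the similarity of $\mathcal{C}$ with $-\HH_{\nu_1}^{\ast}$ and transporting the local spectral data through the similarity and the sign change. Your explicit observation that $\eta(\sigma(\mathcal{C};H^p))=\sigma(\mathcal{C};H^p)$ because the spectrum is a closed disc is exactly the (implicit) step the paper uses to upgrade the two inclusions in statement $(8)$ to the equality in item $(4)$.
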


\item For each $t\geq 0$, let $\Fi_t$ be the hyperbolic automorphism of $\D$ fixing $\pm 1$:
$$ \Fi_t(z) = \frac{(e^{t}+1)z+(e^{t}-1)}{(e^{t}-1)z+(e^{t}+1)} \qquad (z\in \D).$$
Clearly $(\Fi_t)_{t\geq 0}$ forms a semigroup of holomorphic functions in $\D$ that can be extended to a group $(\Fi_t)_{t\in \R}$ with the obvious definition for $t<0.$ As it was discussed in the proof of Theorem \ref{proposicion espectro}, the Koenigs domain associated to such semigroup is a horizontal strip.  In this case, $\sigma(\Delta; H^p) = \{ \lambda \in \C: |\PR(\lambda)| \leq \frac{1}{p} \}.$ For every $\lambda\in \mathbb{C}$ with $\PR(\lambda) >1$ the resolvent operator
$$(\HH_{\nu_\lambda}f)(z) = \int_{0}^\infty e^{-\lambda t} f(\Fi_t(z))dt = \left( \frac{1+z}{1-z}\right)^\lambda \int_{z}^{1} \left( \frac{1+s}{1-s}  \right)^{-\lambda} \frac{2}{1-s^2} f(s)ds, \qquad (z\in \D),$$
is an averaging operator in $H^p$ satisfying the local spectral properties of Theorem \ref{teorema semigrupos}.

\item In general, a formula for the resolvent operators arising from semigroups of composition operators can be obtained. If $(\Fi_t)_{t\geq 0}$ is a hyperbolic semigroup of holomorphic functions in $\D$ with Denjoy-Wolff point $\tau=1 \in \T$, $h$ is the associated Koenigs function and $(C_{\Fi_t})_{t\geq 0}$ the associated semigroup of type $\omega_0\in \R$,  for every $\lambda \in \C$ such that $\PR(\lambda) > \omega_0$ the resolvent at $\lambda$ may be expressed as
$$ (\HH_{\nu_\lambda}f)(z) = e^{\lambda h(z)} \int_{z}^1 h'(s)e^{-\lambda h(s)} f(s) ds, \qquad (f\in H^p,\, z\in \D).$$
 Indeed, if the Laplace transform of the measure $d\nu_\rho(t) = \rho(t)dt$ belongs to $E(-\Delta)$, where $\rho:[0,\infty) \rightarrow \C$, it follows
\begin{equation}\label{forma general subordinado}
(\HH_{\nu_\rho}f)(z) = \int_z^1 h'(s)\rho(h(s)-h(z)) f(s) ds, \qquad (f\in H^p,\, z\in \D).
\end{equation}
Thus, if $(\Fi_t)_{t\geq 0}$ is a hyperbolic semigroup satisfying the hypothesis of Theorem \ref{teorema semigrupos}, every operator of the form \eqref{forma general subordinado} has properties $(1)-(8)$ from the aforementioned Theorem.
\end{enumerate}

\medskip

\subsection{A remark on non-hyperbolic semigroups} To end up this section, we briefly comment the local spectral picture of composition operators semigroups induced by non-hyperbolic symbols, which looks quite different.

\begin{enumerate}
\item For $t\geq 0$, let $\Fi_t$ be the holomorphic function
$$\Fi_t(z) = \frac{e^{-t}z}{(e^{-t}-1)z+1} \qquad (z\in \D).$$
Clearly $\Fi_t(0)=0$ and  $(\Fi_t)_{t\geq 0}$ is an elliptic semigroup. It was shown in \cite{GGM} that the composition operator $C_{\Fi_t}$ acting on $H^p$, $1\leq p < \infty$, satisfies the Dunford's property, so in particular it has the SVEP. Moreover, by considering $\psi_t(z) =  1-(1-z)^{e^{-t}}$ for $t\geq 0$, which also forms an elliptic semigroup, the Cesàro operator appears as the resolvent at $\lambda = 1$ (\cite[p. 19]{Siskakis}):
$$\mathcal{C} = R(1,\Delta),$$
where $\Delta$ is the infinitesimal generator of $(C_{\psi_t})_{t\geq 0}$ acting on $H^p$. In this case, the Cesàro operator appears as a subordinated operator having the Bishop's property and $\mathcal{C}^*$  not enjoying the SVEP (see \cite{MMS}). This situation is drastically different to the one presented in Theorem \ref{teorema semigrupos} for subordinated operators induced by hyperbolic semigroups.

\item For $t\geq 0,$ let $\Fi_t$ be the parabolic automorphism of $\D$ fixing $1$:
$$\Fi_t(z) = \frac{ (1-it)z+it }{-itz+1+it}, \qquad (z\in \D).$$
$(\Fi_t)_{t\geq 0}$ turns out to be a semigroup of holomorphic functions in $\D$. It is well known that $\sigma(C_{\Fi_t}; H^p)=\sigma_p(C_{\Fi_t}; H^p) = \T$ (see \cite{Nordgren}). Moreover, if $\Delta$ is the infinitesimal generator of $(C_{\Fi_t})_{t\geq 0}$ acting on $H^p$, then
$$\sigma(\Delta; H^p) = \sigma_p(\Delta; H^p) = \{i\lambda : \lambda \leq 0 \}$$
(see \cite{Siskakis}). As a particular instance, the equality $\sigma(C_{\Fi_t}; H^p)= \{ e^{\lambda t}: \lambda \in \sigma(\Delta) \}$ holds. Nevertheless, the operators $C_{\Fi_t}$ are generalized scalar (see \cite{Smith}), which in particular implies that $C_{\Fi_t}$ and $C_{\Fi_t}^*$ have the Dunford's property. This fact follows since all generalized scalar operators and their adjoint are decomposable operators, and these operators have Dunford's property (\cite[Section 1.8]{LN00}). In particular, each spectral subspace of such operators associated to proper non-empty closed sets are non-trivial. Clearly, the local spectral picture of the operators of the semigroup differs noticeably from the one for hyperbolic semigroups described in Corollary \ref{corolario semigrupos composicion}.
\end{enumerate}

\end{document}